\documentclass[12pt]{article} 
\usepackage{amsfonts,amsmath,latexsym,amssymb,mathrsfs,amsthm,comment}
\usepackage{slashbox}
\usepackage{caption}

\evensidemargin0cm
\oddsidemargin0cm
\textwidth16cm
\textheight22.8cm
\topmargin-1.7cm  

%\setlength{\topmargin}{0in}
%\setlength{\headheight}{0in}
%\setlength{\headsep}{0in}
%\setlength{\footheight}{.25in}
%\setlength{\footskip}{.25in}
%\setlength{\textheight}{9in}
%\setlength{\oddsidemargin}{0in}
%\setlength{\evensidemargin}{0in}
%\setlength{\marginparsep}{0in}
%\setlength{\marginparwidth}{0in}
%\setlength{\textwidth}{6.5in}
%\def\baselinestretch{1.5}
%\begin{document}

%\newtheorem{truethm}{\indent Theorem}
%\newtheorem{lemma}[truethm]{\indent Lemma}
%\newtheorem{Lemma}{\indent Lemma}[section]
%\newtheorem{cor}{Corollary}
%\newtheorem{fact}{\indent Fact}
%\newtheorem{definition}{Definition}
%\newtheorem{Theorem}{\indent Theorem}
%\newtheorem{Corollary}{\indent Corollary}
%\def\Box{[]}

\let\OLDthebibliography\thebibliography
\renewcommand\thebibliography[1]{
  \OLDthebibliography{#1}
  \setlength{\parskip}{3pt}
  \setlength{\itemsep}{0pt plus 0.3ex}
}

%Eqn numbers within sections and theorems, lemmas ... within sections
% remember to put \setcounter{equation}{0} at start of every section

%

\def\numberlikeadb{\global\def\theequation{\thesection.\arabic{equation}}}
\numberlikeadb
\newtheorem{theorem}{Theorem}[section]
\newtheorem{lemma}[theorem]{Lemma}
\newtheorem{corollary}[theorem]{Corollary}
\newtheorem{open}[theorem]{Open Problem}
\newtheorem{conj}[theorem]{Conjecture}

\newtheorem{remark}[theorem]{Remark}

\usepackage{lscape}
\usepackage{caption}
\usepackage{multirow}
\begin{document}

\title{Inequalities for some integrals involving modified Bessel functions}
\author{Robert E. Gaunt\footnote{School of Mathematics, The University of Manchester, Manchester M13 9PL, UK}}

\date{\today} 
\maketitle

\vspace{-5mm}

\begin{abstract}Simple inequalities are established for some integrals involving the modified Bessel functions of the first and second kind.  In most cases these inequalities are tight in certain limits.  As a consequence, we deduce a tight double inequality, involving the modified Bessel function of the first kind, for a generalized hypergeometric function.  We also present some open problems that arise from this research.
\end{abstract}

%In most cases, we show that we obtain the best possible constant or that our bounds are tight in certain limits.

\noindent{{\bf{Keywords:}}} Modified Bessel function; inequality; integral

\noindent{{{\bf{AMS 2010 Subject Classification:}}} Primary 33C10; 26D15

\section{Introduction}\label{intro}

In the recent papers \cite{gaunt ineq1} and \cite{gaunt ineq3}, simple inequalities, involving modified Bessel functions of the first kind $I_\nu(x)$ and second kind $K_\nu(x)$, were obtained for the integrals
\begin{equation}\label{intbes} \int_0^x \mathrm{e}^{-\gamma t} t^\nu I_\nu(t)\,\mathrm{d}t, \qquad \int_x^\infty \mathrm{e}^{\gamma t}t^\nu K_\nu(t)\,\mathrm{d}t,
\end{equation}
where $x>0$, $0\leq\gamma<1$ and $\nu>-\frac{1}{2}$.  For $\gamma\not=0$ there does not exist simple closed form expressions for these integrals.  Similar inequalities for integrals involving the modified Struve function of the first kind $\mathbf{L}_\nu(x)$ have also been established by \cite{gaunt ineq4}.  The bounds of \cite{gaunt ineq1,gaunt ineq3} were required in the development of Stein's method \cite{stein,chen,np12} for variance-gamma approximation \cite{eichelsbacher, gaunt vg, gaunt vg2}. Although, the combination of their simple form and accuracy mean that the inequalities may also prove useful in other problems involving modified Bessel functions; see, for example, \cite{bs09,baricz3} in which inequalities for the modified Bessel function of the first kind are used to obtain tight bounds for the generalized Marcum Q-function, which frequently arises in radar signal processing.

In this paper, we consider the problem of obtaining inequalities, involving modified Bessel functions, for the integrals
\begin{equation}\label{intstruve} \int_0^x \mathrm{e}^{-\gamma t} \frac{I_\nu(t)}{t^\nu}\,\mathrm{d}t, \qquad \int_x^\infty \mathrm{e}^{\gamma t}\frac{ K_\nu(t)}{t^\nu}\,\mathrm{d}t,
\end{equation}
where $x>0$ and $0\leq\gamma<1$. The conditions imposed on $\nu$ will be different for several of the inequalities.  (In fact, most of our inequalities are for integrals of a more general form.)  This is a related problem to the one considered by \cite{gaunt ineq1,gaunt ineq3}, and, as is the case for the integrals of (\ref{intbes}), it is natural to consider such integrals. Indeed, several related integrals are tabulated in standard references.  For example, formulas 10.43.8 and 10.43.10 of \cite{olver} are
\begin{align*}\int_0^x \mathrm{e}^{-t}\frac{I_\nu(t)}{t^\nu}\,\mathrm{d}t&=\frac{2^{-\nu+1}}{(2\nu-1)\Gamma(\nu)}-\frac{\mathrm{e}^{-x}x^{-\nu+1}}{2\nu-1}\big(I_\nu(x)+I_{\nu-1}(x)\big), \quad \nu\notin\{\tfrac{1}{2}\}\cup-\mathbb{N}_0, \\
\int_x^\infty \mathrm{e}^t \frac{K_{\nu}(t)}{t^\nu}\,\mathrm{d}t&=\frac{\mathrm{e}^x x^{-\nu+1}}{2\nu-1}\big(K_{\nu}(x)+K_{\nu-1}(x)\big),\quad \nu>\tfrac{1}{2}.
\end{align*} 
When $\gamma=0$ the integrals in (\ref{intstruve}) can also be evaluated because the modified Bessel functions can be represented through the generalized hypergeometric function.  To see this, recall that the generalized hypergeometric function (see \cite{olver}) is defined by
\begin{equation*}{}_pF_q\big(a_1,\ldots,a_p;b_1,\ldots,b_q;x\big)=\sum_{k=0}^\infty \frac{(a_1)_k\cdots(a_p)_k}{(b_1)_k\cdots(b_q)_k}\frac{x^k}{k!},
\end{equation*}
and the Pochhammer symbol is given by $(a)_0=1$ and $(a)_k=a(a+1)(a+2)\cdots(a+k-1)$, $k\geq1$.  The modified Bessel functions can be defined for $x>0$ and $\nu\in\mathbb{R}$ by
\begin{equation}\label{defI}I_\nu(x)=\sum_{k=0}^\infty\frac{(\frac{1}{2}x)^{\nu+2k}}{k!\Gamma(\nu+k+1)}, \qquad K_\nu(x)=\frac{\pi}{2\sin(\nu\pi)}\big(I_{-\nu}(x)-I_\nu(x)\big),
\end{equation}
where for $K_\nu(x)$ a limit is taken for integer $\nu$.  We then have the representation (see formula 10.39.9 of \cite{olver})
\begin{equation*}I_\nu(x)=\frac{(\frac{1}{2}x)^\nu}{\Gamma(\nu+1)} {}_0F_1\bigg(-;\nu+1;\frac{x^2}{4}\bigg),
\end{equation*}
and straightforward calculations then yield
\begin{align}\label{besint6}\int \frac{I_\nu(t)}{t^\nu}\,\mathrm{d}t&=\frac{x}{2^\nu\Gamma(\nu+1)}{}_1F_2\bigg(\frac{1}{2};\frac{3}{2},\nu+1;\frac{x^2}{4}\bigg), \\
\int \frac{K_\nu(t)}{t^\nu}\,\mathrm{d}t&=-\frac{\pi x}{2^{\nu+1}\sin(\pi\nu)}\bigg\{\frac{4^\nu x^{-2\nu}}{(2\nu-1)\Gamma(1-\nu)}{}_1F_2\bigg(\frac{1}{2}-\nu;\frac{3}{2}-\nu,1-\nu;\frac{x^2}{4}\bigg)\nonumber\\
\label{besint89}&\quad+\frac{1}{\Gamma(\nu+1)}{}_1F_2\bigg(\frac{1}{2};\frac{3}{2},\nu+1;\frac{x^2}{4}\bigg)\bigg\}.
\end{align}
However, when $\gamma\not=0$ there does not exist closed form formulas for the integrals in (\ref{intstruve}).  Moreover, even when $\gamma=0$ the integrals are given in terms of the generalized hypergeometric function.  This provides the motivation for establishing simple bounds, involving modified Bessel functions, for these integrals.

The approach used in this paper to bound the integrals in (\ref{intstruve}), which involves exploiting basic identities and monotonicity properties of modified Bessel functions, is similar to the one used in \cite{gaunt ineq1,gaunt ineq3}.  These elementary properties of modified Bessel functions are collected in Appendix \ref{appb}.  In Appendix \ref{appa}, we prove an inequality involving the modified Bessel function $K_\nu(x)$ that is needed to prove one of our integral inequalities.  This result may be of independent interest.   The inequalities obtained in this paper are simple, but, in most cases, are seen to be tight in certain limits.

%In the recent papers \cite{gaunt ineq1} and \cite{gaunt ineq3}, an analogous problem was studied in which simple inequalities for the integrals $\int_0^x \mathrm{e}^{-\gamma t} t^\nu I_\nu(t)\,\mathrm{d}t$ (where $\nu>-\frac{1}{2}$) and $\int_0^x \mathrm{e}^{-\gamma t} t^{\nu+1} I_\nu(t)\,\mathrm{d}t$ (where $\nu>-1$) were obtained, where the modified Bessel function of the first kind is defined by
%\begin{equation*}I_\nu(x)=\big(\tfrac{1}{2}x\big)^\nu\sum_{k=0}^\infty\frac{\big(\frac{1}{2}x\big)^{2k}}{k!\Gamma(k+\nu+1)}.
%\end{equation*}   
%The motivation for that work came from the fact that bounds were required for these integrals involving the modified Bessel function $I_\nu(x)$ in the developement of Stein's method for variance-gamma approximation \cite{gaunt thesis, gaunt vg}, although the inequalities may also prove useful in other problems involving modified Bessel functions; see for example, \cite{baricz3} in which inequalities for modified Bessel functions of the first kind were used to obtain lower and upper bounds for integrals involving modified Bessel functions of the first kind.  Because the inequalities for integrals involving the modified Struve function $\mathbf{L}_\nu(x)$ obtained in this paper are simple and surprisingly accurate they may be of use in similar such applications.

\section{Inequalities for integrals of modified Bessel functions}\label{sec2}

The following theorem complements the inequalities of Theorem 2.5 of \cite{gaunt ineq1} and Theorem 2.2 of \cite{gaunt ineq3} for the integral $\int_x^\infty \mathrm{e}^{\gamma t}t^\nu K_\nu(t)\,\mathrm{d}t$. 

\begin{theorem}\label{tiger}Let $0<\gamma<1$ and $n<1$.    Then, for all $x>0$,
\begin{align}\label{bk1}\int_x^\infty\frac{K_{\nu+n}(t)}{t^\nu}\,\mathrm{d}t&>\frac{K_{\nu+n-2}(x)}{x^\nu}, \quad \nu>\tfrac{5}{2}-2n, \\
\label{bk3}\int_x^\infty \frac{K_{\nu+n}(t)}{t^\nu}\,\mathrm{d}t&<\frac{K_{\nu+n-1}(x)}{x^\nu}, \quad \nu\in\mathbb{R}, \\
\label{bk2}\int_x^\infty \frac{K_{\nu+n}(t)}{t^\nu}\,\mathrm{d}t&<\frac{2(\nu+n-1)}{2\nu+n-1}\frac{K_{\nu+n-1}(x)}{x^\nu}-\frac{n-1}{2\nu+n-1}\frac{K_{\nu+n-3}(x)}{x^\nu}, \\ 
&\quad\quad\quad\quad\quad\quad\quad\quad\quad\quad\quad\quad\quad\quad\quad\quad\quad\quad\quad \text{for $\nu>\tfrac{1}{2}(1-n)$.} \nonumber \\
\label{bk4}\int_x^\infty \mathrm{e}^{\gamma t}\frac{K_{\nu+n}(t)}{t^\nu}\,\mathrm{d}t&<\frac{\mathrm{e}^{\gamma x}}{1-\gamma}\int_x^\infty\frac{K_{\nu+n}(t)}{t^\nu}\,\mathrm{d}t, \quad \nu\geq\tfrac{1}{2}-n, \\
\label{bk6}\int_x^\infty \mathrm{e}^{\gamma t}\frac{K_{\nu+n}(t)}{t^\nu}\,\mathrm{d}t&<\frac{\mathrm{e}^{\gamma x}}{1-\gamma}\frac{K_{\nu+n-1}(x)}{x^\nu}, \quad \nu\geq\tfrac{1}{2}-n,  \\
\label{bk5}\int_x^\infty \mathrm{e}^{\gamma t}\frac{K_{\nu+n}(t)}{t^\nu}\,\mathrm{d}t&<\frac{\mathrm{e}^{\gamma x}}{1-\gamma}\bigg(\frac{2(\nu+n-1)}{2\nu+n-1}\frac{K_{\nu+n-1}(x)}{x^\nu}-\frac{n-1}{2\nu+n-1}\frac{K_{\nu+n-3}(x)}{x^\nu}\bigg), \\
&\quad\quad\quad\quad\quad\quad\quad\quad\quad\quad\quad\quad\quad\quad\quad\quad\quad\quad\quad \text{for $\nu>\tfrac{1}{2}(1-n)$.} \nonumber
\end{align}
Inequality (\ref{bk3}) is reversed when $n>1$ and equality is attained when $n=1$.  Equality is attained in (\ref{bk4}) and (\ref{bk6}) when $n=1$ and $\nu=-\tfrac{1}{2}$, and the inequalities are reversed if $n>1$ and $\nu\leq\frac{1}{2}-n$.  Inequalities (\ref{bk1})--(\ref{bk5}) are tight as $x\rightarrow\infty$.  Inequality (\ref{bk2}) is also tight as $x\downarrow0$, provided $\nu>1-n$.
\end{theorem}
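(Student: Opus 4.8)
The backbone of every bound is the exact identity obtained by integrating the differentiation formula $\frac{\mathrm{d}}{\mathrm{d}t}\big[t^{-\nu}K_{\nu+n-1}(t)\big]=(n-1)t^{-\nu-1}K_{\nu+n-1}(t)-t^{-\nu}K_{\nu+n}(t)$ (itself a consequence of $K_\mu'(t)=\frac{\mu}{t}K_\mu(t)-K_{\mu+1}(t)$) over $(x,\infty)$, namely
\begin{equation*}
\int_x^\infty\frac{K_{\nu+n}(t)}{t^\nu}\,\mathrm{d}t=\frac{K_{\nu+n-1}(x)}{x^\nu}+(n-1)\int_x^\infty\frac{K_{\nu+n-1}(t)}{t^{\nu+1}}\,\mathrm{d}t.
\end{equation*}
Since the remaining integral is positive, the sign of $n-1$ yields \eqref{bk3} immediately, together with its reversal for $n>1$ and equality at $n=1$.

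For \eqref{bk1} and \eqref{bk2} the plan is to use the difference-function method: set $D(x)$ equal to the claimed right-hand side minus the integral, observe $D(\infty)=0$, and show $D'$ has a fixed sign so that $D$ keeps the sign forced by its limit. Differentiating the $K$-terms with the same formula and removing the highest index via the recurrence $K_{\nu+n}(x)=K_{\nu+n-2}(x)+\frac{2(\nu+n-1)}{x}K_{\nu+n-1}(x)$, the computation for \eqref{bk2} collapses: writing $c_1=\frac{2(\nu+n-1)}{2\nu+n-1}$ and $c_2=\frac{n-1}{2\nu+n-1}$ one checks that $1-c_1=-c_2$ and that the coefficient of $K_{\nu+n-1}$ vanishes, leaving $D'(x)=-c_2(n-3)x^{-\nu-1}K_{\nu+n-3}(x)$, which is negative precisely when $2\nu+n-1>0$, i.e.\ $\nu>\tfrac12(1-n)$. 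The same reduction applied to \eqref{bk1} instead leads to the pointwise comparison $(x-2(\nu+n-1))K_{\nu+n-1}(x)<(x+n-2)K_{\nu+n-2}(x)$ between Bessel functions of consecutive order. This is the one genuinely hard point: it does not follow from the elementary recurrences alone, and is exactly the auxiliary inequality for $K_\nu$ established separately in Appendix \ref{appa}, whose range of validity produces the hypothesis $\nu>\tfrac52-2n$.

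For the exponentially weighted bounds \eqref{bk4}, \eqref{bk6} and \eqref{bk5} I would isolate a general mechanism. With $f\geq0$, $R(x)=\int_x^\infty f$ and $L(x)=\int_x^\infty\mathrm{e}^{\gamma(t-x)}f(t)\,\mathrm{d}t$, one has $L'(x)=-f(x)-\gamma L(x)$; integrating the resulting first-order relation for $(1-\gamma)L-R$, which vanishes at infinity, shows that $f(x)\geq R(x)$ for all $x$ forces $\int_x^\infty\mathrm{e}^{\gamma t}f\,\mathrm{d}t<\frac{\mathrm{e}^{\gamma x}}{1-\gamma}R(x)$. Taking $f(t)=t^{-\nu}K_{\nu+n}(t)$, I would verify $f\geq R$ by bounding $R$ with \eqref{bk3} and then using that $K_\mu$ is increasing in $|\mu|$, so that $K_{\nu+n-1}\leq K_{\nu+n}$ exactly when $\nu+n\geq\tfrac12$; this gives \eqref{bk4} under $\nu\geq\tfrac12-n$ and simultaneously pins the equality case $n=1,\nu=-\tfrac12$ (where $f\propto\mathrm{e}^{-t}$) and the reversal for $n>1$, $\nu\leq\tfrac12-n$. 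Inequalities \eqref{bk6} and \eqref{bk5} then follow by combining this mechanism with the bounds \eqref{bk3} and \eqref{bk2} for the factor $R(x)$.

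Finally, tightness as $x\to\infty$ is read off from $K_\mu(x)\sim\sqrt{\pi/(2x)}\,\mathrm{e}^{-x}$, under which all the consecutive-order $K$'s, and the tail integrals they bound, are asymptotically equal, so each ratio of the two sides tends to $1$; tightness of \eqref{bk2} as $x\downarrow0$ follows from $K_\mu(x)\sim\tfrac12\Gamma(\mu)(\tfrac{x}{2})^{-\mu}$ after checking that the $K_{\nu+n-3}$ term dominates, which requires $\nu>1-n$. The main obstacle throughout is \eqref{bk1}: the other inequalities reduce to sign checks together with the identity above, whereas \eqref{bk1} hinges on the external Bessel inequality of Appendix \ref{appa}.
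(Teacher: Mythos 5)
Your treatment of the six inequalities is essentially the paper's own proof, correctly reconstructed. For (\ref{bk3}), your exact identity is just the integrated form of the paper's pointwise step, with the same sign discussion for $n<1$, $n=1$, $n>1$. For (\ref{bk1}), you correctly isolate the crux as the pointwise comparison $(x-2(\nu+n-1))K_{\nu+n-1}(x)<(x+n-2)K_{\nu+n-2}(x)$, which is precisely inequality (\ref{need1}) in the proof of Lemma \ref{lemap} in Appendix \ref{appa}, with the same resulting hypothesis $\nu>\tfrac{5}{2}-2n$. Your collapsed derivative computation for (\ref{bk2}) is a correct differential repackaging of the paper's route (integrate (\ref{jpjp}) over $(x,\infty)$, then apply (\ref{bk3})); the algebra checks out. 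Your integrating-factor mechanism for (\ref{bk4}) is the paper's integration by parts in disguise, using (\ref{bk3}) and (\ref{cake}) identically, with the same equality case $n=1$, $\nu=-\tfrac12$ and the same reversal; one point to make explicit is that the boundary condition needed is $\mathrm{e}^{\gamma x}\big((1-\gamma)L(x)-R(x)\big)\rightarrow0$, not merely that $(1-\gamma)L-R$ vanishes, though this follows from $\mathrm{e}^{\gamma x}R(x)\leq\int_x^\infty\mathrm{e}^{\gamma t}f(t)\,\mathrm{d}t\rightarrow0$.

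The genuine gap is the final claim, tightness of (\ref{bk2}) as $x\downarrow0$, where your argument is backwards. You say tightness follows ``after checking that the $K_{\nu+n-3}$ term dominates.'' If that term dominated, then---its coefficient $-\tfrac{n-1}{2\nu+n-1}$ being positive---the right-hand side of (\ref{bk2}) would exceed the integral by an unbounded factor as $x\downarrow0$, so the bound could not be tight. What tightness requires is the opposite: the $K_{\nu+n-1}$ term must dominate, because by (\ref{Ktend0}) and $u\Gamma(u)=\Gamma(u+1)$,
\begin{equation*}
\frac{2(\nu+n-1)}{2\nu+n-1}\frac{K_{\nu+n-1}(x)}{x^\nu}\sim\frac{2^{\nu+n-1}\Gamma(\nu+n)}{2\nu+n-1}\,x^{-2\nu-n+1},\quad x\downarrow0,
\end{equation*}
which is exactly the small-$x$ behaviour of $\int_x^\infty t^{-\nu}K_{\nu+n}(t)\,\mathrm{d}t$ (a fact that itself needs the $J_1+J_2$ splitting argument the paper gives and you omit), while the $K_{\nu+n-3}$ term must be shown negligible. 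Carrying out that dominance check correctly, with $s=\nu+n$ and $K_{-\mu}=K_\mu$, the $K_{s-3}$ term scales as $x^{-\nu-|s-3|}$, which is $o\big(x^{-\nu-(s-1)}\big)$ if and only if $|s-3|<s-1$, i.e.\ $s>2$. So the dominance argument yields tightness only for $\nu>2-n$, not $\nu>1-n$. (The paper's own proof applies (\ref{Ktend0}) to $K_{\nu+n-3}$ without the reflection $K_{-\mu}=K_\mu$ and so is open to the same objection on the range $1-n<\nu\leq2-n$; but as written your claim proves the opposite of what is needed and this part must be redone.)
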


%\int_x^\infty \frac{K_{\nu+n}(t)}{t^\nu}\,\mathrm{d}t&>\frac{K_{\nu+n-1}(x)}{x^\nu}, \\

\begin{proof}We first establish inequalities (\ref{bk1})--(\ref{bk5}) and then prove that the inequalities are tight in certain limits.

(i)  Let us first compare the derivatives of $\int_x^\infty\frac{K_{\nu+n}(t)}{t^\nu}\,\mathrm{d}t$ and $\frac{K_{\nu+n-2}(x)}{x^\nu}$.  From the differentiation formula (\ref{diffKi}) and identity (\ref{Kidentity}) we have
\begin{align}\frac{\mathrm{d}}{\mathrm{d}x}\bigg(\frac{K_{\nu+n-2}(x)}{x^\nu}\bigg)&=\frac{\mathrm{d}}{\mathrm{d}x}\bigg(x^{n-2}\cdot\frac{K_{\nu+n-2}(x)}{x^{\nu+n-2}}\bigg)=(n-2)\frac{K_{\nu+n-2}(x)}{x^{\nu+1}}-\frac{K_{\nu+n-1}(x)}{x^\nu} \nonumber\\
&=\frac{n-2}{2(\nu+n-2)}\bigg(\frac{K_{\nu+n-1}(x)}{x^\nu}-\frac{K_{\nu+n-3}(x)}{x^\nu}\bigg)-\frac{K_{\nu+n-1}(x)}{x^\nu}\nonumber \\
\label{jpjp}&=-\frac{2\nu+n-2}{2(\nu+n-2)}\frac{K_{\nu+n-1}(x)}{x^\nu}+\frac{2-n}{2(\nu+n-2)}\frac{K_{\nu+n-3}(x)}{x^\nu} \\
\label{lmbv}&>-\frac{K_{\nu+n}(x)}{x^\nu}=\frac{\mathrm{d}}{\mathrm{d}x}\bigg(\int_x^\infty\frac{K_{\nu+n}(t)}{t^\nu}\,\mathrm{d}t\bigg),
\end{align} 
where the inequality follows from Lemma \ref{lemap}. Thus,  $\int_x^\infty\frac{K_{\nu+n}(t)}{t^\nu}\,\mathrm{d}t$ decays at a faster rate than $\frac{K_{\nu+n-2}(x)}{x^\nu}$ for all $x>0$.  

We now consider the asymptotic behaviour of $\int_x^\infty\frac{K_{\nu+n}(t)}{t^\nu}\,\mathrm{d}t$ and $\frac{K_{\nu+n-2}(x)}{x^\nu}$ as $x\rightarrow\infty$.  To this end, we note that a routine asymptotic analysis using integration by parts gives that, as $x\rightarrow\infty$,
\begin{align*}\int_x^\infty t^{-\nu-1/2}\mathrm{e}^{-t}\,\mathrm{d}t&\sim x^{-\nu-1/2}\mathrm{e}^{-x}\bigg(1-\frac{\nu+\tfrac{1}{2}}{x}\bigg), \\
\int_x^\infty t^{-\nu-3/2}\mathrm{e}^{-t}\,\mathrm{d}t&\sim x^{-\nu-3/2}\mathrm{e}^{-x}.
\end{align*}
Using  (\ref{Ktendinfinity}) and these limiting forms gives that, as $x\rightarrow\infty$,
\begin{align}\int_x^\infty\frac{K_{\nu+n}(t)}{t^\nu}\,\mathrm{d}t&\sim\sqrt{\frac{\pi}{2}}\int_x^\infty t^{-\nu-1/2}\mathrm{e}^{-t}\bigg(1+\frac{4(\nu+n)^2-1}{8t}\bigg)\,\mathrm{d}t \nonumber\\
\label{cheb}&\sim\sqrt{\frac{\pi}{2}}x^{-\nu-1/2}\mathrm{e}^{-x}\bigg(1+\frac{4(\nu+n)^2-8\nu-5}{8x}\bigg).
\end{align}
We also have, as $x\rightarrow\infty$,
\begin{align}\label{chec}\frac{K_{\nu+n-2} (x)}{x^\nu} \sim \sqrt{\frac{\pi}{2}}x^{-\nu-1/2} \mathrm{e}^{-x}\bigg(1+\frac{4(\nu+n-2)^2-1}{8x}\bigg).
\end{align}
One can readily check that the second term in the expansion (\ref{cheb}) is greater than the second term in the expansion (\ref{chec}) if $\nu>\frac{5}{2}-2n$.  Combining this with (\ref{lmbv}) then proves inequality (\ref{bk1}).

(ii) Suppose $n<1$.  Then, for $x>0$,
\begin{equation*}\int_x^\infty \frac{K_{\nu+n}(t)}{t^\nu}\,\mathrm{d}t=\int_x^\infty t^{n-1}\cdot \frac{K_{\nu+n}(t)}{t^{\nu+n-1}}\,\mathrm{d}t<x^{n-1}\int_x^\infty\frac{K_{\nu+n}(t)}{t^{\nu+n-1}}\,\mathrm{d}t=\frac{K_{\nu+n-1}(x)}{x^\nu},
\end{equation*} 
where we used (\ref{diffKi}) to evaluate the integral. We can see that we have equality if $n=1$ and the inequality is reversed if $n>1$.

(iii) From (\ref{jpjp}), we have
\begin{equation*}\frac{\mathrm{d}}{\mathrm{d}t}\bigg(\frac{K_{\nu+n-1}(t)}{t^\nu}\bigg)=-\frac{2\nu+n-1}{2(\nu+n-1)}\frac{K_{\nu+n}(t)}{t^\nu}-\frac{n-1}{2(\nu+n-1)}\frac{K_{\nu+n-2}(t)}{t^\nu}.
\end{equation*}
Integrating both sides of this expression over $(x,\infty)$, applying the fundamental theorem of calculus and rearranging gives
\begin{equation*}\int_x^\infty\frac{K_{\nu+n}(t)}{t^\nu}\,\mathrm{d}t=\frac{2(\nu+n-1)}{2\nu+n-1}\frac{K_{\nu+n-1}(x)}{x^\nu}-\frac{n-1}{2\nu+n-1}\int_x^\infty\frac{K_{\nu+n-2}(t)}{t^\nu}\,\mathrm{d}t.
\end{equation*}
On applying inequality (\ref{bk3}) to the integral on the right hand-side of the above expression we obtain (\ref{bk2}).

(iv) Suppose $n<1$ and $\nu\geq\tfrac{1}{2}-n$.  Using integration by parts gives
\begin{align*}\int_x^\infty \mathrm{e}^{\gamma t}\frac{K_{\nu+n}(t)}{t^\nu}\,\mathrm{d}t&=\mathrm{e}^{\gamma x}\int_x^\infty \frac{K_{\nu+n}(t)}{t^\nu}\,\mathrm{d}t +\gamma\int_x^\infty \mathrm{e}^{\gamma t}\bigg(\int_t^\infty\frac{K_{\nu+n}(u)}{u^\nu}\,\mathrm{d}u\bigg)\,\mathrm{d}t \\
&< \mathrm{e}^{\gamma x}\int_x^\infty \frac{K_{\nu+n}(t)}{t^\nu}\,\mathrm{d}t +\gamma\int_x^\infty \mathrm{e}^{\gamma t}\frac{K_{\nu+n-1}(t)}{t^\nu}\,\mathrm{d}t \\
&\leq \mathrm{e}^{\gamma x}\int_x^\infty \frac{K_{\nu+n}(t)}{t^\nu}\,\mathrm{d}t +\gamma\int_x^\infty \mathrm{e}^{\gamma t}\frac{K_{\nu+n}(t)}{t^\nu}\,\mathrm{d}t,
\end{align*}
where we used (\ref{bk3}) and (\ref{cake}) (as $\nu\geq\frac{1}{2}-n$) to obtain the first and second inequalities respectively.  Rearranging yields inequality (\ref{bk4}).  If $n=1$, then the strict inequality in the above expression because an equality.  If also $\nu=-\frac{1}{2}$, then the final inequality becomes an equality.  Therefore if $n=1$ and $\nu=-\frac{1}{2}$ then (\ref{bk4}) is an equality.  Similar considerations, using (\ref{cakkk}), show that (\ref{bk4}) is reversed if $n>1$ and $\nu\leq\frac{1}{2}-n$.

(v) Combine inequalities (\ref{bk4}) and (\ref{bk3}).

(vi) Combine inequalities (\ref{bk4}) and (\ref{bk2}).

(vii) To see that inequalities (\ref{bk1})--(\ref{bk5}) are tight as $x\rightarrow\infty$, we first note that using the limiting form (\ref{Ktendinfinity}) followed by a straightforward asymptotic analysis gives that, for $0\leq\gamma<1$, $n\in\mathbb{R}$ and $\nu\in\mathbb{R}$,
\begin{equation*}\int_x^\infty \mathrm{e}^{\gamma t}\frac{K_{\nu+n}(t)}{t^\nu}\,\mathrm{d}t \sim\sqrt{\frac{\pi}{2}}\int_x^\infty t^{-\nu-1/2}\mathrm{e}^{-(1-\gamma)t}\,\mathrm{d}t\sim\sqrt{\frac{\pi}{2}}\frac{1}{1-\gamma}x^{-\nu-1/2}\mathrm{e}^{-(1-\gamma)x},  
\end{equation*}
as $x\rightarrow\infty$.  As an example, inequality (\ref{bk6}) is tight as $x\rightarrow\infty$ because in this limit we also have
\begin{equation*}\frac{\mathrm{e}^{\gamma x}}{1-\gamma}\frac{K_{\nu+n-1}(x)}{x^\nu}\sim\sqrt{\frac{\pi}{2}}\frac{1}{1-\gamma}x^{-\nu-1/2}\mathrm{e}^{-(1-\gamma)x},
\end{equation*}
and the tightness of the other inequalities is established similarly.

Establishing that inequality (\ref{bk2}) is tight as $x\downarrow0$, provided $\nu>1-n$ (which we now assume from here on), is a little more involved.  We first find a limiting form for the integral in the limit $x\downarrow0$.  Suppose that $0<x\ll y\ll 1$.  We may write
\begin{equation*}\int_x^\infty \frac{K_{\nu+n}(t)}{t^\nu}\,\mathrm{d}t=\int_x^y \frac{K_{\nu+n}(t)}{t^\nu}\,\mathrm{d}t+\int_y^\infty \frac{K_{\nu+n}(t)}{t^\nu}\,\mathrm{d}t =: J_1+J_2.
\end{equation*}
Then, from the limiting form (\ref{Ktend0}), we have
\begin{align*}J_1\sim \int_x^y 2^{\nu+n-1}\Gamma(\nu+n)t^{-2\nu-n}\,\mathrm{d}t &=\frac{2^{\nu+n-1}\Gamma(\nu+n)}{2\nu+n-1}\big(x^{-2\nu-n+1}+y^{-2\nu-n+1}\big)\\
&\sim \frac{2^{\nu+n-1}\Gamma(\nu+n)}{2\nu+n-1}x^{-2\nu-n+1},
\end{align*} 
and, on using inequality (\ref{bk3}) to bound the integral,
\begin{align*}J_2=\int_y^\infty\frac{K_{\nu+n}(t)}{t^\nu}\,\mathrm{d}t<\frac{K_{\nu+n-1}(y)}{y^\nu} \ll\frac{K_{\nu+n-1}(x)}{x^\nu}\sim \frac{2^{\nu+n-2}\Gamma(\nu+n-1)}{x^{2\nu+n-1}}.
\end{align*}
Therefore $J_2\ll J_1$, and we have
\begin{equation*}\int_x^\infty \frac{K_{\nu+n}(t)}{t^\nu}\,\mathrm{d}t\sim\frac{2^{\nu+n-1}\Gamma(\nu+n)}{2\nu+n-1}x^{-2\nu-n+1}.
\end{equation*}
But, from (\ref{Ktend0}) and the standard formula $u\Gamma(u)=\Gamma(u+1)$, we also have
\begin{align*}\frac{2(\nu+n-1)}{2\nu+n-1}\frac{K_{\nu+n-1}(x)}{x^\nu}-\frac{n-1}{2\nu+n-1}\frac{K_{\nu+n-3}(x)}{x^\nu}\sim\frac{2^{\nu+n-1}\Gamma(\nu+n)}{2\nu+n-1}x^{-2\nu-n+1},
\end{align*}
from which it follows that inequality (\ref{bk2}) is tight as $x\downarrow0$, provided $\nu>1-n$.
\end{proof}

%It is straightforward to see that $\alpha_{\nu,0}=\nu-\sqrt{\nu^2-2\nu-1}$ is a monotone decreasing function of $\nu$, for $\nu\geq1+\sqrt{2}$.  The maximum value is $\alpha_{1+\sqrt{2},0}=1+\sqrt{2}$, and the function decays to $\lim_{\nu\rightarrow\infty}\alpha_{\nu,0}=1$.

\begin{remark}Inequality (\ref{bk4}) bounds the integral $\int_x^\infty \mathrm{e}^{\gamma t}\frac{K_{\nu+n}(t)}{t^\nu}\,\mathrm{d}t$ in terms of the easier to bound integral $\int_x^\infty \frac{K_{\nu+n}(t)}{t^\nu}\,\mathrm{d}t$.  In this way inequality (\ref{bk4}) is used to deduce inequalities (\ref{bk6}) and (\ref{bk5}) from inequalities (\ref{bk3}) and (\ref{bk2}).  Thus, inequality (\ref{bk4}) is more accurate than inequalities (\ref{bk6}) and (\ref{bk5}), but suffers from taking a more complicated form.  A similar comment applies to inequalities (\ref{bi4}) and (\ref{bi5}) of Theorem \ref{tiger1} below.

Also, it is worth noting that, due to the positivity of $K_\nu(x)$ for $x>0$ and $\nu\in\mathbb{R}$, inequality (\ref{bk2}) outperforms inequality (\ref{bk3}) if $\frac{n-1}{2\nu+n-1}>0$, with the reverse being true if $\frac{n-1}{2\nu+n-1}<0$.  A similar comparison can be made between inequalities (\ref{bk6}) and (\ref{bk5}).
\end{remark}

\begin{remark}Arguing similarly to we did in part (vii) of the proof of Theorem \ref{tiger}, we have, as $x\downarrow0$
\begin{equation}\label{bfgh}\int_x^\infty \mathrm{e}^{\gamma t}\frac{K_{\nu+n}(t)}{t^\nu}\,\mathrm{d}t\sim\frac{2^{\nu+n-1}\Gamma(\nu+n)}{2\nu+n-1}x^{-2\nu-n+1}.
\end{equation}
Here the limiting form does not involve $\gamma$, and inequality (\ref{bk5}) is thus not tight as $x\downarrow0$.  This is in contrast to inequality (\ref{bk2}), which can be obtained by setting $\gamma=0$ in (\ref{bk5}).  

Now let $\gamma>0$ (note that here we are not imposing that $\gamma<1$).  Then we can use the fact that $\mathrm{e}^{\gamma t}$ is an increasing function of $t$ to obtain from inequality (\ref{bk1}) that, for $x>0$,
\begin{equation*}\int_x^\infty \mathrm{e}^{\gamma t}\frac{K_{\nu+n}(t)}{t^\nu}\,\mathrm{d}t>\mathrm{e}^{\gamma x}\frac{K_{\nu+n-2}(x)}{x^\nu}, \quad \nu>\tfrac{5}{2}-2n, \:\: n<1.
\end{equation*}
Unlike inequalities (\ref{bk4})--(\ref{bk5}), this bound does not contain a $\frac{1}{1-\gamma}$ factor and is therefore not tight as $x\rightarrow\infty$.   
\end{remark}

This author believes that inequality(\ref{bk1}) can be improved to $\int_x^\infty\frac{K_{\nu+n}(t)}{t^\nu}\,\mathrm{d}t>\frac{K_{\nu+n-\beta_{\nu,n}}(x)}{x^\nu}$, where $\beta_{\nu,n}<2$ for $\nu>\frac{5}{2}-2n$, $n<1$.  That this is an improvement can be seen from inequality (\ref{cakkk}).   

\begin{conj}\label{connn}Let $n<1$.  For $\nu\geq 1-n+\sqrt{2(1-n)}$, define $\alpha_{\nu,n}:=\nu+n-\sqrt{(\nu+n)^2-2\nu-1}$.  Then, for all $x>0$,
\begin{equation}\label{bk100}\int_x^\infty\frac{K_{\nu+n}(t)}{t^\nu}\,\mathrm{d}t>\frac{K_{\nu+n-\alpha_{\nu,n}}(x)}{x^\nu}, \quad \nu\geq1-n+\sqrt{2(1-n)}.
\end{equation}
We have equality in (\ref{bk100}) when $n=1$, and this equality is valid for all $\nu\in\mathbb{R}$.  

Let $\alpha_{\nu,n}':=\nu+n+\sqrt{(\nu+n)^2-2\nu-1}$, so that $\nu+n-\alpha_{\nu,n}'=-(\nu+n-\alpha_{\nu,n})$ and thus $K_{\nu+n-\alpha_{\nu,n}'}(x)=K_{\nu+n-\alpha_{\nu,n}}(x)$.  Then, the index $\nu+n-\alpha_{\nu,n}$ (and equivalently $\nu+n-\alpha_{\nu,n}'$) in inequality (\ref{bk100}) is best possible, in the sense that, for any $\beta\notin\{\alpha_{\nu,n},\alpha_{\nu,n}'\}$, either $\frac{K_{\nu+n-\beta}(x)}{x^\nu}<\frac{K_{\nu+n-\alpha_{\nu,n}}(x)}{x^\nu}$ for all $x>0$, or there exists a $y>0$ such that $\int_y^\infty\frac{K_{\nu+n}(t)}{t^\nu}\,\mathrm{d}t<\frac{K_{\nu+n-\beta}(y)}{y^\nu}$.
\end{conj}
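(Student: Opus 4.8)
The plan is to mimic the derivative‑comparison method of part (i) of the proof of Theorem~\ref{tiger}, reducing (\ref{bk100}) to a pointwise inequality for modified Bessel functions that I then settle by an integral representation together with a power‑series positivity argument. Write $p=\nu+n$ and note that, by definition of $\alpha_{\nu,n}$, the index on the right of (\ref{bk100}) is $\nu+n-\alpha_{\nu,n}=\mu$, where $\mu:=\sqrt{(\nu+n)^2-2\nu-1}$; the hypothesis $\nu\geq 1-n+\sqrt{2(1-n)}$ is precisely the condition $\mu^2\geq0$, with $\mu=0$ at the boundary. Since both $\int_x^\infty K_{\nu+n}(t)t^{-\nu}\,\mathrm{d}t$ and $K_\mu(x)x^{-\nu}$ vanish as $x\to\infty$, it suffices (exactly as in part (i)) to compare derivatives for all $x>0$, i.e.\ to show $\frac{\mathrm{d}}{\mathrm{d}x}\big(K_\mu(x)x^{-\nu}\big)>-K_{\nu+n}(x)x^{-\nu}$. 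Using (\ref{diffKi}) and (\ref{Kidentity}) as in the derivation of (\ref{jpjp}), and then the identity $n-\alpha_{\nu,n}=\mu-\nu$, this reduces (for $\mu>0$) to the three‑term inequality
\[2\mu K_{\nu+n}(x)>(\mu+\nu)K_{\mu+1}(x)+(\mu-\nu)K_{\mu-1}(x),\qquad x>0.\]

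To prove this I would substitute the standard representation $K_\sigma(x)=\int_0^\infty \mathrm{e}^{-x\cosh t}\cosh(\sigma t)\,\mathrm{d}t$, valid for all real $\sigma$, which turns the claim into $\int_0^\infty \mathrm{e}^{-x\cosh t}g(t)\,\mathrm{d}t>0$ with $g(t)=\mu\cosh(pt)-\mu\cosh(\mu t)\cosh t-\nu\sinh(\mu t)\sinh t$. The crux is to show $g(t)>0$ for all $t>0$. Expanding $g$ as an even power series $g(t)=\sum_{k\ge0}c_k\,t^{2k}/(2k)!$, a short computation using the two relations $p^2-(\mu+1)^2=2(\nu-\mu)$ and $p^2-(\mu-1)^2=2(\nu+\mu)$ (both immediate from $p^2=\mu^2+2\nu+1$) yields
\[2c_k=2(\nu-\mu)(\nu+\mu)\sum_{j=0}^{k-1}p^{2j}\big[(\mu+1)^{2(k-1-j)}-(\mu-1)^{2(k-1-j)}\big].\]
Here the decisive sign facts are $\nu>\mu\ge0$, which follows from $\nu^2-\mu^2=(1-n)(2\nu+n+1)>0$ under the stated hypotheses — this is exactly where $n<1$ enters — and $\mu+1>|\mu-1|$. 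Together they make every bracket nonnegative and the $j\le k-2$ brackets strictly positive, so $c_0=c_1=0$ while $c_k>0$ for $k\ge2$; hence $g(t)>0$ for $t>0$ and the inequality follows. (The vanishing of $c_1$ is forced precisely by $p^2=\mu^2+2\nu+1$, which is the whole point of the choice of $\alpha_{\nu,n}$, and equality at $n=1$ corresponds to the degenerate case $\nu=\mu$, $g\equiv0$, where (\ref{bk100}) collapses to the identity behind (\ref{bk3}).) The boundary $\mu=0$ must be handled separately, as dividing by $2\mu$ is then illegitimate; there one shows directly that $K_{p}(x)-K_1(x)-\tfrac{\nu}{x}K_0(x)>0$ using $x^{-1}K_0(x)=\int_0^\infty \mathrm{e}^{-x\cosh t}\,t\sinh t\,\mathrm{d}t$ and the analogous, simpler, coefficient positivity, which holds because $p=\sqrt{2\nu+1}>1$.

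For the sharpness assertion I would argue asymptotically. By (\ref{Ktendinfinity}) and the computation leading to (\ref{cheb}), as $x\to\infty$,
\[\int_x^\infty\frac{K_{\nu+n}(t)}{t^\nu}\,\mathrm{d}t\sim\sqrt{\tfrac{\pi}{2}}\,x^{-\nu-1/2}\mathrm{e}^{-x}\Big(1+\tfrac{4\mu^2-1}{8x}\Big),\qquad \frac{K_{\nu+n-\beta}(x)}{x^\nu}\sim\sqrt{\tfrac{\pi}{2}}\,x^{-\nu-1/2}\mathrm{e}^{-x}\Big(1+\tfrac{4(\nu+n-\beta)^2-1}{8x}\Big),\]
so the two expansions agree through second order exactly when $(\nu+n-\beta)^2=\mu^2$, i.e.\ $\beta\in\{\alpha_{\nu,n},\alpha_{\nu,n}'\}$. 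For any other $\beta$ there are two cases. If $(\nu+n-\beta)^2>\mu^2$, the second‑order term gives $K_{\nu+n-\beta}(x)x^{-\nu}>\int_x^\infty K_{\nu+n}(t)t^{-\nu}\,\mathrm{d}t$ for all large $x$, which is the second alternative. If $(\nu+n-\beta)^2<\mu^2$, then $|\nu+n-\beta|<\mu$, and since $\sigma\mapsto K_\sigma(x)$ is strictly increasing in $|\sigma|$ for fixed $x>0$ (clear from the integral representation), we get $K_{\nu+n-\beta}(x)<K_\mu(x)$ for all $x>0$, the first alternative.

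The main obstacle is the positivity $g(t)>0$. One is tempted to deduce it from convexity of $a\mapsto\cosh(at)$, but that fails: the relevant convex combination of frequencies lands below $\mu+1$ (equivalently $\alpha_{\nu,n}>1$, which holds precisely because $n<1$), so Jensen's inequality points the wrong way. The power‑series factorization above is what rescues the argument, and verifying that its coefficients have the correct sign — together with the separate analysis of the degenerate boundary $\mu=0$ — is where essentially all the work lies.
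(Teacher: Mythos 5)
You are proving a statement that the paper records only as a \emph{conjecture}: the paper contains no proof of Conjecture \ref{connn}, just a remark that reduces (\ref{bk100}) to the derivative inequality (\ref{bk99}) with $\beta=\alpha_{\nu,n}$, reports numerical support for it, and explicitly notes that the Appendix \ref{appa} method (Segura's ratio bounds), which proves the integer case $\beta=2$ used for (\ref{bk1}), cannot be adapted to non-integer $\beta$. Your outer structure coincides with the paper's sketch --- the reduction to a derivative comparison between functions vanishing at infinity, and the sharpness dichotomy via matching second-order terms in the $x\to\infty$ expansions combined with the monotonicity (\ref{kineq5}) of $K_\sigma(x)$ in $|\sigma|$ --- but your proof of the key three-term inequality $2\mu K_{\nu+n}(x)>(\mu+\nu)K_{\mu+1}(x)+(\mu-\nu)K_{\mu-1}(x)$, with $\mu:=\nu+n-\alpha_{\nu,n}$, is the genuinely new ingredient, and I can find no gap in it. Writing $P=p^2$, $A=(\mu+1)^2$, $B=(\mu-1)^2$ and $S_k:=2\mu P^k-(\mu+\nu)A^k-(\mu-\nu)B^k=2c_k$, the identities $P-A=2(\nu-\mu)$ and $P-B=2(\nu+\mu)$ (equivalent to $p^2=\mu^2+2\nu+1$, the defining property of $\alpha_{\nu,n}$) give
\begin{equation*}
S_k=PS_{k-1}+2(\nu^2-\mu^2)\bigl(A^{k-1}-B^{k-1}\bigr),\qquad S_0=S_1=0,
\end{equation*}
and iterating this recurrence yields exactly your factorization; since $\nu^2-\mu^2=(1-n)(2\nu+n+1)>0$ and $A>B\geq0$ under the stated hypotheses, indeed $c_k>0$ for all $k\geq2$, so $g(t)>0$ for $t>0$ and the integral representation $K_\sigma(x)=\int_0^\infty\mathrm{e}^{-x\cosh t}\cosh(\sigma t)\,\mathrm{d}t$ finishes the pointwise inequality (I also verified $S_2=8\mu(\nu^2-\mu^2)$ and $S_3=8\mu(\nu^2-\mu^2)(3\mu^2+3+2\nu)$ directly). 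Your separate treatment of the boundary case $\mu=0$, where division by $2\mu$ is illegitimate, is likewise correct: there the coefficients are $(2\nu+1)^k-1-2k\nu$, which vanish for $k\in\{0,1\}$ and are positive for $k\geq2$. In a complete write-up you should (i) prove the coefficient recurrence by the induction above rather than asserting the closed-form sum, and (ii) state the $x\to\infty$ expansion of $\int_x^\infty K_{\nu+n}(t)t^{-\nu}\,\mathrm{d}t$ with a controlled error term (the computation leading to (\ref{cheb}) provides this) so that the ``there exists $y$'' alternative in the sharpness claim is rigorous. Modulo that routine work, your argument settles Conjecture \ref{connn} affirmatively, by a method (integral representation plus power-series coefficient positivity) that succeeds precisely where the paper says its own Appendix \ref{appa} technique fails, namely for non-integer $\beta$; you should consider writing this up carefully as a standalone result.
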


\begin{remark}In interpreting Conjecture \ref{connn}, it useful to note the following.  We have
\[\frac{\partial \alpha_{\nu,n}}{\partial \nu}=1-\frac{\nu+n-1}{\sqrt{(\nu+n)^2-2\nu-1}},\]
and one can readily check that, for $\nu$ and $n$ defined as in Conjecture \ref{connn}, this derivative is negative.  Therefore, for fixed $n$, $\alpha_{\nu,n}$ is a decreasing function of $\nu$, for $\nu\geq1-n+\sqrt{2(1-n)}$.  This leads to the bound $1\leq\alpha_{\nu,n}\leq 1+\sqrt{2(1-n)}$.  Moreover,  $\alpha_{\nu,n}<2$ for $\nu>\frac{5}{2}-2n$, $n<1$.

Let us now sketch why it reasonable to believe that Conjecture \ref{connn} may hold.  Comparing the $x\rightarrow\infty$ asymptotic expansions of $\int_x^\infty\frac{K_{\nu+n}(t)}{t^\nu}\,\mathrm{d}t$ and $\frac{K_{\nu+n-\beta}(x)}{x^\nu}$, one can see that the first terms agree for both functions, the second terms agree if and only if $\beta\in\{\alpha_{\nu,n},\alpha_{\nu,n}'\}$, and for such $\beta$ the third term in the expansion of $\int_x^\infty\frac{K_{\nu+n}(t)}{t^\nu}\,\mathrm{d}t$ is greater than that of $\frac{K_{\nu+n-\beta}(x)}{x^\nu}$.  (This analysis also leads to the assertion of optimally of $\alpha_{\nu,n}$ and $\alpha_{\nu,n}'$, as stated in the conjecture.)  Also, numerical experiments carried out using \emph{Mathematica} suggest that the inequality below is valid for $\beta=\alpha_{\nu,n}$, provided $\nu$ and $n$ are as defined in Conjecture \ref{connn}:
\begin{align}\frac{\mathrm{d}}{\mathrm{d}x}\bigg(\frac{K_{\nu+n-\beta}(x)}{x^\nu}\bigg)&=-\frac{2\nu+n-\beta}{2(\nu+n-\beta)}\frac{K_{\nu+n-\beta+1}(x)}{x^\nu}-\frac{n-\beta}{2(\nu+n-\beta)}\frac{K_{\nu+n-\beta-1}(x)}{x^\nu}\nonumber \\
\label{bk99}&\stackrel{(?)}{>}-\frac{K_{\nu+n}(x)}{x^\nu}=\frac{\mathrm{d}}{\mathrm{d}x}\bigg(\int_x^\infty\frac{K_{\nu+n}(t)}{t^\nu}\,\mathrm{d}t\bigg).
\end{align}
(The case $\beta=2$ was considered in the proof of inequality (\ref{bk1}).)  If inequality (\ref{bk99}) was proved rigorously, for $\beta=\alpha_{\nu,n}$, then arguing similarly to we did in the proof of inequality (\ref{bk1}) would prove inequality (\ref{bk100}).  Our proof of inequality (\ref{bk99}) for the case $\beta=2$ (see Appendix \ref{appa}) relied heavily on the fact that this value of $\beta$ is an integer, meaning that it cannot be easily adapted to $\beta=\alpha_{\nu,n}$.
\end{remark}

The inequalities in the following theorem complement the inequalities for the integral $\int_0^x \mathrm{e}^{-\gamma t}t^\nu I_\nu(t)\,\mathrm{d}t$ that are given in Theorem 2.1 of \cite{gaunt ineq1} and Theorem 2.3 of \cite{gaunt ineq3}.

\begin{theorem}\label{tiger1}Let $0<\gamma<1$ and $n>-1$. Then, for all $x>0$,
\begin{align}\label{bi1}\int_0^x \frac{I_\nu(t)}{t^\nu}\,\mathrm{d}t&>\frac{I_\nu(x)}{x^\nu}-\frac{1}{2^\nu\Gamma(\nu+1)}, \quad \nu>-1, \\
\label{bi2}\int_0^x \frac{I_{\nu+n}(t)}{t^\nu}\,\mathrm{d}t&>\frac{I_{\nu+n+1}(x)}{x^\nu}, \quad \nu>-\tfrac{1}{2}(n+1), \\
\label{bi3}\int_0^x \frac{I_{\nu+n}(t)}{t^\nu}\,\mathrm{d}t&<\frac{2(\nu+n+1)}{n+1}\frac{I_{\nu+n+1}(x)}{x^\nu}-\frac{2\nu+n+1}{n+1}\frac{I_{\nu+n+3}(x)}{x^\nu}, \: \nu>-\tfrac{1}{2}(n+1), \\
\label{bi4}\int_0^x \mathrm{e}^{-\gamma t}\frac{I_\nu(t)}{t^\nu}\,\mathrm{d}t&>\frac{1}{1-\gamma}\bigg(\mathrm{e}^{-\gamma x}\int_0^x\frac{I_\nu(t)}{t^\nu}\,\mathrm{d}t-\frac{1}{2^\nu\Gamma(\nu+1)}(1-\mathrm{e}^{-\gamma x})\bigg), \quad \nu>-1, \\
\label{bi5}\int_0^x \mathrm{e}^{-\gamma t}\frac{I_\nu(t)}{t^\nu}\,\mathrm{d}t&>\frac{1}{1-\gamma}\bigg(\mathrm{e}^{-\gamma x}\frac{I_{\nu}(x)}{x^\nu}-\frac{1}{2^\nu\Gamma(\nu+1)}\bigg), \quad \nu>-1.
\end{align}
We have equality in (\ref{bi2}) and (\ref{bi3}) if $\nu=-\frac{1}{2}(n+1)$.  Inequalities (\ref{bi1})--(\ref{bi5}) are tight as $x\rightarrow\infty$ and inequality (\ref{bi3}) is also tight as $x\downarrow0$.   

Now suppose that $\nu>-\frac{1}{2}(n+1)$, and let 
\begin{equation*}C_{\nu,n}:=\sup_{x>0}\frac{x^\nu}{I_{\nu+n}(x)}\int_0^x \frac{I_{\nu+n}(t)}{t^\nu}\,\mathrm{d}t.
\end{equation*}
The existence of $C_{\nu,n}$ is guaranteed by inequalities (\ref{bi3}) and (\ref{Imon}), and we have $C_{\nu,n}<2(\nu+n+1)$.  Suppose also that $0<\gamma<\frac{1}{C_{\nu,n}}$.  Then, for all $x>0$,
\begin{align}\label{bi7}\int_0^x \mathrm{e}^{-\gamma t}\frac{I_{\nu+n}(t)}{t^\nu}\,\mathrm{d}t&<\frac{\mathrm{e}^{-\gamma x}}{1-C_{\nu,n}\gamma}\int_0^x\frac{I_{\nu+n}(t)}{t^\nu}\,\mathrm{d}t, \\
\label{bi8}\int_0^x \mathrm{e}^{-\gamma t}\frac{I_{\nu+n}(t)}{t^\nu}\,\mathrm{d}t&< \frac{\mathrm{e}^{-\gamma x}}{1-C_{\nu,n}\gamma}\bigg(\frac{2(\nu+n+1)}{n+1}\frac{I_{\nu+n+1}(x)}{x^\nu}-\frac{2\nu+n+1}{n+1}\frac{I_{\nu+n+3}(x)}{x^\nu} \bigg).
\end{align}
\end{theorem}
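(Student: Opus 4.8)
The plan is to mirror the method of the proof of Theorem~\ref{tiger}, using the differentiation and recurrence relations for $I_\nu$ together with the monotonicity (\ref{Imon}) $I_{\mu+1}(x)<I_\mu(x)$, and then to treat the new constant $C_{\nu,n}$ separately. First I would establish (\ref{bi1})--(\ref{bi3}) by comparison of derivatives, noting that in each case both sides agree as $x\downarrow0$. For (\ref{bi1}), the formula $\frac{\mathrm{d}}{\mathrm{d}x}(x^{-\nu}I_\nu(x))=x^{-\nu}I_{\nu+1}(x)$ gives that the right-hand side has derivative $\frac{I_{\nu+1}(x)}{x^\nu}$ while the left-hand side has derivative $\frac{I_\nu(x)}{x^\nu}$; since both sides tend to $0$ as $x\downarrow0$ (using $\frac{I_\nu(x)}{x^\nu}\to\frac{1}{2^\nu\Gamma(\nu+1)}$), the inequality reduces to $I_\nu(x)>I_{\nu+1}(x)$, which is (\ref{Imon}). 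For (\ref{bi2}), a short calculation from the differentiation and recurrence relations yields the identity $\frac{\mathrm{d}}{\mathrm{d}x}(\frac{I_{\nu+n+1}(x)}{x^\nu})=\frac{n+1}{2(\nu+n+1)}\frac{I_{\nu+n}(x)}{x^\nu}+\frac{2\nu+n+1}{2(\nu+n+1)}\frac{I_{\nu+n+2}(x)}{x^\nu}$, and the derivative comparison reduces, after cancellation, to $I_{\nu+n}(x)>I_{\nu+n+2}(x)$, again (\ref{Imon}); the coefficient $\frac{2\nu+n+1}{2(\nu+n+1)}$ vanishes when $\nu=-\tfrac12(n+1)$, which accounts for the equality case. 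Rearranging this identity, integrating over $(0,x)$, and applying (\ref{bi2}) with $n$ replaced by $n+2$ to the resulting integral of $\frac{I_{\nu+n+2}(t)}{t^\nu}$ then gives (\ref{bi3}).

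Next, for (\ref{bi4}) and (\ref{bi5}) I would integrate by parts, writing $\int_0^x\mathrm{e}^{-\gamma t}\frac{I_\nu(t)}{t^\nu}\,\mathrm{d}t=\mathrm{e}^{-\gamma x}F(x)+\gamma\int_0^x\mathrm{e}^{-\gamma t}F(t)\,\mathrm{d}t$, where $F(x):=\int_0^x\frac{I_\nu(t)}{t^\nu}\,\mathrm{d}t$ and the boundary term at $0$ vanishes because $F(0)=0$. Bounding $F(t)$ below inside the double integral by (\ref{bi1}), using $\int_0^x\mathrm{e}^{-\gamma t}\,\mathrm{d}t=\frac1\gamma(1-\mathrm{e}^{-\gamma x})$, and rearranging (valid since $1-\gamma>0$) gives (\ref{bi4}); substituting (\ref{bi1}) once more on the right produces (\ref{bi5}). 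Tightness as $x\rightarrow\infty$ follows as in part~(vii) of the proof of Theorem~\ref{tiger}: the large-argument form of $I_\nu$ gives that both sides of each inequality share the leading asymptotic $\frac{1}{\sqrt{2\pi}}\frac{1}{1-\gamma}x^{-\nu-1/2}\mathrm{e}^{(1-\gamma)x}$ (and $\frac{1}{\sqrt{2\pi}}x^{-\nu-1/2}\mathrm{e}^{x}$ when $\gamma=0$). Tightness of (\ref{bi3}) as $x\downarrow0$ follows from matching both sides against $\frac{x^{n+1}}{2^{\nu+n}\Gamma(\nu+n+1)(n+1)}$, obtained from the small-argument form of $I_\nu$.

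Finally, for the constant $C_{\nu,n}$ and inequalities (\ref{bi7}), (\ref{bi8}), set $g(x):=\frac{x^\nu}{I_{\nu+n}(x)}\int_0^x\frac{I_{\nu+n}(t)}{t^\nu}\,\mathrm{d}t$. The small- and large-argument forms of $I_\nu$ give $g(x)\to0$ as $x\downarrow0$ and $g(x)\to1$ as $x\rightarrow\infty$, while dividing (\ref{bi3}) by $\frac{I_{\nu+n}(x)}{x^\nu}$ and bounding the Bessel ratios using (\ref{Imon}) shows $g$ is bounded above; hence $C_{\nu,n}=\sup_{x>0}g(x)$ is finite, with the explicit estimate $C_{\nu,n}<2(\nu+n+1)$. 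By definition this yields the pointwise bound $\int_0^t\frac{I_{\nu+n}(u)}{u^\nu}\,\mathrm{d}u\leq C_{\nu,n}\frac{I_{\nu+n}(t)}{t^\nu}$ for all $t>0$, which is the key ingredient. Integrating $\int_0^x\mathrm{e}^{-\gamma t}\frac{I_{\nu+n}(t)}{t^\nu}\,\mathrm{d}t$ by parts exactly as before gives $\mathrm{e}^{-\gamma x}G(x)+\gamma\int_0^x\mathrm{e}^{-\gamma t}G(t)\,\mathrm{d}t$ with $G(x):=\int_0^x\frac{I_{\nu+n}(t)}{t^\nu}\,\mathrm{d}t$; replacing $G(t)$ by $C_{\nu,n}\frac{I_{\nu+n}(t)}{t^\nu}$ in the double integral and rearranging yields $(1-C_{\nu,n}\gamma)\int_0^x\mathrm{e}^{-\gamma t}\frac{I_{\nu+n}(t)}{t^\nu}\,\mathrm{d}t<\mathrm{e}^{-\gamma x}G(x)$. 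Dividing by $1-C_{\nu,n}\gamma$, which is positive precisely because of the hypothesis $\gamma<1/C_{\nu,n}$, gives (\ref{bi7}), and bounding $G(x)$ on the right-hand side of (\ref{bi7}) by (\ref{bi3}) then gives (\ref{bi8}).

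The step I expect to be the main obstacle is the control of $C_{\nu,n}$. In Theorem~\ref{tiger} the weighted estimate (\ref{bk4}) carried the clean factor $\frac{1}{1-\gamma}$, whereas here the self-referential integration-by-parts argument produces the factor $\frac{1}{1-C_{\nu,n}\gamma}$, so the whole scheme only closes once one knows $C_{\nu,n}$ is finite and the restriction $\gamma<1/C_{\nu,n}$ keeps $1-C_{\nu,n}\gamma$ positive. Establishing finiteness and, in particular, the explicit bound $C_{\nu,n}<2(\nu+n+1)$ (most delicate when $-1<n<0$, where the crude estimate from (\ref{bi3}) is weakest) is the technical heart of this part. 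A second point requiring care is strictness of the inequalities in the double-integral step: since $g$ is real-analytic and non-constant on $(0,\infty)$, the equality $g(t)=C_{\nu,n}$ can hold only on a set of measure zero, so the integrated inequality $\int_0^x\mathrm{e}^{-\gamma t}G(t)\,\mathrm{d}t<C_{\nu,n}\int_0^x\mathrm{e}^{-\gamma t}\frac{I_{\nu+n}(t)}{t^\nu}\,\mathrm{d}t$ is strict, as required.
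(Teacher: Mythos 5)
Your proposal is correct and follows essentially the same route as the paper's own proof: the derivative identity for $x^{-\nu}I_{\nu+n+1}(x)$ combined with (\ref{Imon}) for (\ref{bi1})--(\ref{bi3}), integration by parts with (\ref{bi1}) (respectively the defining property of $C_{\nu,n}$) fed into the inner integral for (\ref{bi4})--(\ref{bi5}) (respectively (\ref{bi7})--(\ref{bi8})), and the same asymptotic matching for the tightness claims. The one caveat is the point you flag yourself: the explicit bound $C_{\nu,n}<2(\nu+n+1)$ for $-1<n<0$ is not actually delivered by dividing (\ref{bi3}) by $x^{-\nu}I_{\nu+n}(x)$ and using (\ref{Imon}) (that argument only yields $C_{\nu,n}\leq \frac{2(\nu+n+1)}{n+1}$, which exceeds $2(\nu+n+1)$ when $n<0$), but the paper's proof has exactly the same gap, dismissing this claim as ``immediate,'' so your treatment is no weaker than the paper's.
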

%  The constant $C_\nu=\frac{2(\nu+1)}{(2\nu+1)(1-\gamma)-\gamma}$ in inequality (\ref{bi6}) is best possible when $\gamma=0$, and $\lim_{\nu\rightarrow\infty}C_\nu=1/(1-\gamma)$, which is the best possible constant.
%The constants in the bounds (\ref{besi225}), (\ref{besi44}) and (\ref{besi55}) cannot be improved.  Inequalities (\ref{besi11}) and (\ref{besi44}) hold for all $\gamma>0$.  

\begin{proof}We proceed as in the proof of Theorem \ref{tiger} by first establishing inequalities (\ref{bi1})--(\ref{bi8}) and then proving that the inequalities are tight in certain limits.

(i) From inequality (\ref{Imon}) and the differentiation formula (\ref{diffone}) we obtain
\begin{align*}\int_0^x \frac{I_\nu(t)}{t^\nu}\,\mathrm{d}t>\int_0^x \frac{I_{\nu+1}(t)}{t^\nu}\,\mathrm{d}t=\frac{I_\nu(x)}{x^\nu}-\lim_{x\downarrow0}\frac{I_\nu(x)}{x^\nu} =\frac{I_\nu(x)}{x^\nu}-\frac{1}{2^\nu\Gamma(\nu+1)},
\end{align*}
where we used the limiting form (\ref{Itend0}) in the final step.

(ii) The assertion that (\ref{bi2}) is an equality if $\nu=-\frac{1}{2}(n+1)$ can be readily checked using (\ref{diffone}) and (\ref{Itend0}).  The same applies to (\ref{bi3}).   We now suppose that $\nu>-\frac{1}{2}(n+1)$.  Consider the function
\begin{equation*}u(x)=\int_0^x \frac{I_{\nu +n}(t)}{t^\nu}\,\mathrm{d}t-\frac{I_{\nu+n+1}(x)}{x^\nu}.
\end{equation*}
We argue that $u(x)>0$ for all $x>0$, which will prove the result.  We first note that from the differentiation formula (\ref{diffone}) followed by identity (\ref{Iidentity}) we have that
\begin{align}\frac{\mathrm{d}}{\mathrm{d}x}\bigg(\frac{I_{\nu+n+1}(x)}{x^\nu}\bigg)&=\frac{\mathrm{d}}{\mathrm{d}x}\bigg(x^{n+1}\cdot\frac{I_{\nu+n+1}(x)}{x^{\nu+n+1}}\bigg)=(n+1)\frac{I_{\nu+n+1}(x)}{x^{\nu+1}}+\frac{I_{\nu+n+2}(x)}{x^{\nu}}\nonumber\\
&=\frac{n+1}{2(\nu+n+1)}\bigg(\frac{I_{\nu+n}(x)}{x^\nu}-\frac{I_{\nu+n+2}(x)}{x^\nu}\bigg)+\frac{I_{\nu+n+2}(x)}{x^\nu}\nonumber \\
\label{num3}&=\frac{n+1}{2(\nu+n+1)}\frac{I_{\nu+n}(x)}{x^\nu}+\frac{2\nu+n+1}{2(\nu+n+1)}\frac{I_{\nu+n+2}(x)}{x^\nu}.
\end{align}
Therefore
\begin{align*}u'(x)=\frac{2\nu+n+1}{2(\nu+n+1)}\bigg(\frac{I_{\nu+n}(x)}{x^\nu}-\frac{I_{\nu+n+2}(x)}{x^\nu}\bigg) >0,
\end{align*}
where we used (\ref{Imon}) to obtain the inequality.  Also, from (\ref{Itend0}), as $x\downarrow0$,
\begin{align*}u(x)&\sim \int_0^x \frac{t^{n}}{2^{\nu+n}\Gamma(\nu+n+1)}\,\mathrm{d}t-\frac{x^{n+1}}{2^{\nu+n+1}\Gamma(\nu+n+2)}\\
&=\frac{x^{n+1}}{2^{\nu+n}(n+1)\Gamma(n+\nu+1)} -\frac{x^{n+1}}{2^{\nu+n+1}\Gamma(\nu+n+2)}\\ 
&=\frac{x^{n+1}}{2^{\nu+n}\Gamma(\nu+n+1)}\bigg(\frac{1}{n+1}-\frac{1}{2(\nu+n+1)}\bigg) >0,
\end{align*}
where the inequality holds because $\nu>-\frac{1}{2}(n+1)$.  Thus, we conclude that $u(x)>0$ for all $x>0$, as required.

(iii) Integrating both sides of (\ref{num3}) over $(0,x)$, applying the fundamental theorem of calculus and rearranging gives
\begin{equation*}\int_0^x \frac{I_{\nu +n} (t)}{t^\nu}\,\mathrm{d}t = \frac{2(\nu+n+1)}{n+1} \frac{I_{\nu +n+1} (x)}{x^\nu} -\frac{2\nu+n+1}{n+1} \int_0^x \frac{I_{\nu +n +2} (t)}{t^\nu}\,\mathrm{d}t.
\end{equation*}
Applying inequality (\ref{bi2}) to the integral on the right hand-side of the above expression then yields (\ref{bi3}).

(iv) Let $\nu>-1$.   Then integration by parts and inequality (\ref{bi1}) gives
\begin{align*} \int_0^x \mathrm{e}^{-\gamma t} \frac{I_\nu(t)}{t^\nu} \,\mathrm{d}t &= \mathrm{e}^{-\gamma x}\int_0^x \frac{I_\nu(t)}{t^\nu}\,\mathrm{d}t + \gamma \int_0^x \mathrm{e}^{-\gamma t}\bigg(\int_0^t  \frac{I_\nu(u)}{u^\nu} \,\mathrm{d}u\bigg) \,\mathrm{d}t \\
&> \mathrm{e}^{-\gamma x}\int_0^x \frac{I_\nu(t)}{t^\nu}\,\mathrm{d}t + \gamma \int_0^x \mathrm{e}^{-\gamma t} \frac{I_\nu(t)}{t^\nu}  \,\mathrm{d}t-\gamma \int_0^x \frac{\mathrm{e}^{-\gamma t}}{2^\nu\Gamma(\nu+1)} \,\mathrm{d}t,
\end{align*}
whence on evaluating $\int_0^x\mathrm{e}^{-\gamma t} \,\mathrm{d}t=\frac{1}{\gamma}(1-\mathrm{e}^{-\gamma x})$ and rearranging we obtain (\ref{bi4}).

(v) Apply inequality (\ref{bi1}) to inequality (\ref{bi4}).

(vi) We now prove inequality (\ref{bi7}); the assertion that $C_{\nu,n}<2(\nu+n+1)$ is immediate from inequalities (\ref{bi3}) and (\ref{Imon}).  Now, integrating by parts similarly to we did in part (iv), we have
\begin{align*}\int_0^x \mathrm{e}^{-\gamma t} \frac{I_{\nu+n}(t)}{t^\nu} \,\mathrm{d}t &= \mathrm{e}^{-\gamma x}\int_0^x \frac{I_{\nu+n}(t)}{t^\nu}\,\mathrm{d}t + \gamma \int_0^x \mathrm{e}^{-\gamma t}\bigg(\int_0^t  \frac{I_{\nu+n}(u)}{u^\nu} \,\mathrm{d}u\bigg) \,\mathrm{d}t \\
&<\mathrm{e}^{-\gamma x}\int_0^x \frac{I_{\nu+n}(t)}{t^\nu}\,\mathrm{d}t + C_{\nu,n}\gamma \int_0^x \mathrm{e}^{-\gamma t} \frac{I_{\nu+n}(t)}{t^\nu}  \,\mathrm{d}t.
\end{align*}
As we assumed $0<\gamma<\frac{1}{C_{\nu,n}}$, on rearranging we obtain inequality (\ref{bi7}).

(vii) Apply inequality (\ref{bi3}) to inequality (\ref{bi7}).

(viii) Finally, we prove that inequalities (\ref{bi1})--(\ref{bi5}) are tight as $x\rightarrow\infty$ and inequality (\ref{bi3}) is also tight as $x\downarrow0$.  We begin by noting that a straightforward asymptotic analysis using (\ref{Itendinfinity}) gives that, for $0\leq\gamma<1$, $n>-1$ and $\nu\in\mathbb{R}$, 
\begin{equation}\label{eqeq1} \int_0^x \mathrm{e}^{-\gamma t}\frac{I_{\nu+n}(t)}{t^\nu}\,\mathrm{d}t\sim \frac{1}{\sqrt{2\pi}(1-\gamma)}x^{-\nu-1/2}\mathrm{e}^{(1-\gamma)x}, \quad x\rightarrow\infty,
\end{equation}
and we also have
\begin{equation}\label{eqeq2}\mathrm{e}^{-\gamma x}\frac{I_{\nu+n}(x)}{x^\nu}\sim  \frac{1}{\sqrt{2\pi}}x^{-\nu-1/2}\mathrm{e}^{(1-\gamma)x}, \quad x\rightarrow\infty.
\end{equation}
We can use (\ref{eqeq1}) and (\ref{eqeq2}) to prove that inequalities (\ref{bi1})--(\ref{bi5}) are tight as $x\rightarrow\infty$.  For example, for inequality (\ref{bi1}), we have, as $x\rightarrow\infty$,
\begin{align*}\int_0^x \frac{I_\nu(t)}{t^\nu}\,\mathrm{d}t\sim \frac{1}{\sqrt{2\pi}}x^{-\nu-1/2}\mathrm{e}^{x}\quad \text{and} \quad  \frac{I_\nu(x)}{x^\nu}-\frac{1}{2^\nu\Gamma(\nu+1)}\sim \frac{1}{\sqrt{2\pi}}x^{-\nu-1/2}\mathrm{e}^{x},
\end{align*}
and the tightness of the other inequalities is established similarly.

It now remains to prove that inequality (\ref{bi3}) is tight as $x\downarrow0$.  From (\ref{Itend0}), we have on the one hand, as $x\downarrow0$,
\begin{equation*}\int_0^x \frac{I_{\nu+n}(t)}{t^\nu}\,\mathrm{d}t\sim\int_0^x \frac{t^{n}}{2^{\nu+n}\Gamma(\nu+n+1)}\,\mathrm{d}t=\frac{x^{n+1}}{2^{\nu+n}(n+1)\Gamma(\nu+n+1)},
\end{equation*}
and on the other,
\begin{align*}\frac{2(\nu+n+1)}{n+1}\frac{I_{\nu+n+1}(x)}{x^\nu}-\frac{2\nu+n+1}{n+1}\frac{I_{\nu+n+3}(x)}{x^\nu} \sim \frac{x^{n+1}}{2^{\nu+n}(n+1)\Gamma(\nu+n+1)},
\end{align*}
where we used that $u\Gamma(u)=\Gamma(u+1)$.  This proves the claim.
\end{proof}

\begin{remark}When $n=0$ a direct comparison can be made between inequalities (\ref{bi1}) and (\ref{bi2}), which we denote by $l_\nu^{(1)}(x)$ and $l_\nu^{(2)}(x)$, respectively. Applying (\ref{Itend0}), we have that, as $x\downarrow0$,
\begin{equation*}l_\nu^{(1)}(x)\sim\frac{x^2}{2^{\nu+2}\Gamma(\nu+2)} \quad \text{and} \quad l_\nu^{(2)}(x)\sim\frac{x}{2^{\nu+1}\Gamma(\nu+2)},
\end{equation*} 
whilst applying (\ref{Itendinfinity}) gives that, as $x\rightarrow\infty$,
\begin{equation*}l_\nu^{(1)}(x)\sim \frac{\mathrm{e}^{x}}{\sqrt{2\pi x}}\bigg(1-\frac{4\nu^2-1}{8x}\bigg) \quad \text{and} \quad l_\nu^{(2)}(x)\sim\frac{\mathrm{e}^{x}}{\sqrt{2\pi x}}\bigg(1-\frac{4(\nu+1)^2-1}{8x}\bigg).
\end{equation*}
Thus, for all $\nu>-\frac{1}{2}$, $l_\nu^{(2)}(x)$ outperforms $l_\nu^{(1)}(x)$ as $x\downarrow0$, whereas $l_\nu^{(1)}(x)$ outperforms $l_\nu^{(2)}(x)$ as $x\rightarrow\infty$.

Inequality (\ref{bi1}) also serves an important role in the proof of inequality (\ref{bi4}), and consequently inequality (\ref{bi5}).  This is because the bound $l_\nu^{(1)}(x)$ is given in terms of the modified Bessel function $I_\nu(x)$, whereas $l_\nu^{(2)}(x)$ is given in terms of $I_{\nu+1}(x)$. 
\end{remark}

\begin{remark}\label{enveuvj}The constants $C_{\nu,n}$ can be computed numerically.  As an example, we used \emph{Mathematica} to find $C_{0,0}=1.266$, $C_{1,0}=1.682$ $C_{3,0}=2.285$, $C_{5,0}=2.754$ and $C_{10,0}=3.670$.
\end{remark}

\begin{remark}The upper bounds (\ref{bi7}) and (\ref{bi8}) are not tight in the limits $x\downarrow0$ and $x\rightarrow\infty$, but they are of the correct order in both limits ($O(x^{n+1})$ as $x\downarrow0$, and $O(x^{-\nu-1/2}\mathrm{e}^{(1-\gamma)x})$ as $x\rightarrow\infty$). The bounds are simple in that they are given in terms of modified Bessel functions of the first kind and a constant $C_{\nu,n}$ that can bounded above by $2(\nu+n+1)$ or computed numerically as in Remark \ref{enveuvj} if improved accuracy is required.  Whilst for improved accuracy the constant $C_{\nu,n}$ needs to be computed numerically for particular values of $\nu$ and $n$, once this has been done one has a simple bound on the integral $\int_0^x \mathrm{e}^{-\gamma t}\frac{I_{\nu+n}(t)}{t^\nu}\,\mathrm{d}t$, for fixed $\nu$ and $n$, that holds for all $x>0$. However, the bounds are are not entirely satisfactory in that they only hold for $0<\gamma<\frac{1}{C_{\nu,n}}$, whereas one would like the inequalities to be valid for all $0<\gamma<1$.  It should be mentioned that a similar problem was encountered by \cite{gaunt ineq3} in that the upper bounds obtained for $\int_0^x \mathrm{e}^{-\gamma t} t^\nu I_\nu(t)\,\mathrm{d}t$ where only valid for $0<\gamma<\alpha_\nu$, for some $0<\alpha_\nu<1$.  The open problem below, which is analogous to Open Problem 2.10 of \cite{gaunt ineq3}, is considered by this author to be interesting.
\end{remark}

%The bounds are simple but are not entirely satisfactory in that they only hold for $0<\gamma<\frac{1}{C_{\nu,n}}$, whereas one would like the inequalities to be valid for all $0<\gamma<1$.  

\begin{open}\label{openprob1}Find a constant $M_{\nu,\gamma}>0$ such that, for all $x>0$,
\begin{equation*}\label{bes1} \int_0^x \mathrm{e}^{-\gamma t}\frac{I_\nu(t)}{t^\nu}\,\mathrm{d}t<M_{\nu,\gamma}\mathrm{e}^{-\gamma x}\frac{I_{\nu+1}(x)}{x^\nu}, \quad \nu>-\tfrac{1}{2},\:0<\gamma<1.
\end{equation*}
\end{open}

We end by noting that one can combine the inequalities of Theorems \ref{tiger} and \ref{tiger1} and the integral formulas (\ref{besint6}) and (\ref{besint89}) to obtain lower and upper bounds for a generalized hypergeometric function.  We give an example in the following corollary. 

%Here is an example.  It is interesting to note that the double inequality is remarkably accurate for large $x$ or large $\nu$.

\begin{corollary}\label{struvebessel}Let $\nu>\frac{1}{2}$. Then, for all $x>0$,
\begin{equation}\label{dob22}I_{\nu}(x)<\frac{x^\nu}{2^{\nu-1}\Gamma(\nu)}{}_1F_2\bigg(\frac{1}{2};\frac{3}{2},\nu;\frac{x^2}{4}\bigg) <2\nu I_\nu(x)-(2\nu-1)I_{\nu+2}(x).
\end{equation}
\end{corollary}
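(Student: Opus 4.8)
The plan is to recognise the middle member of (\ref{dob22}) as a constant multiple of the integral $\int_0^x \frac{I_{\nu-1}(t)}{t^{\nu-1}}\,\mathrm{d}t$, and then to bound that integral below and above by directly quoting inequalities (\ref{bi2}) and (\ref{bi3}) of Theorem \ref{tiger1}. First I would apply the integral formula (\ref{besint6}) with $\nu$ replaced by $\nu-1$, which gives
\[\int_0^x \frac{I_{\nu-1}(t)}{t^{\nu-1}}\,\mathrm{d}t = \frac{x}{2^{\nu-1}\Gamma(\nu)}\,{}_1F_2\bigg(\frac{1}{2};\frac{3}{2},\nu;\frac{x^2}{4}\bigg).\]
Multiplying through by $x^{\nu-1}$ shows that the central expression in (\ref{dob22}) is exactly $x^{\nu-1}\int_0^x \frac{I_{\nu-1}(t)}{t^{\nu-1}}\,\mathrm{d}t$. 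Hence, after dividing by the positive factor $x^{\nu-1}$, the corollary is equivalent to the pair of bounds
\[\frac{I_\nu(x)}{x^{\nu-1}} < \int_0^x \frac{I_{\nu-1}(t)}{t^{\nu-1}}\,\mathrm{d}t < \frac{2\nu I_\nu(x) - (2\nu-1)I_{\nu+2}(x)}{x^{\nu-1}}.\]

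Next I would obtain these two bounds by specialising Theorem \ref{tiger1} to $n=0$ and replacing its parameter $\nu$ by $\nu-1$. With these choices the lower bound (\ref{bi2}) reads $\int_0^x \frac{I_{\nu-1}(t)}{t^{\nu-1}}\,\mathrm{d}t > \frac{I_\nu(x)}{x^{\nu-1}}$, and the upper bound (\ref{bi3}) reads $\int_0^x \frac{I_{\nu-1}(t)}{t^{\nu-1}}\,\mathrm{d}t < 2\nu\frac{I_\nu(x)}{x^{\nu-1}} - (2\nu-1)\frac{I_{\nu+2}(x)}{x^{\nu-1}}$, both valid under the parameter restriction $\nu-1 > -\tfrac{1}{2}$, that is $\nu > \tfrac{1}{2}$. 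Multiplying both inequalities through by $x^{\nu-1}$ then recovers the two halves of (\ref{dob22}).

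Since each half follows immediately from an already established inequality together with the closed-form evaluation (\ref{besint6}), there is no genuine obstacle here; the proof is essentially a substitution. The only point requiring care is the bookkeeping of the index shift: one must check that the coefficients $\frac{2(\nu+n+1)}{n+1}$ and $\frac{2\nu+n+1}{n+1}$ appearing in (\ref{bi3}) collapse to $2\nu$ and $2\nu-1$ when $n=0$ and the theorem's $\nu$ is taken to be $\nu-1$ (so that the Bessel indices $\nu+n+1$ and $\nu+n+3$ become $\nu$ and $\nu+2$), and that the hypothesis $\nu>-\tfrac{1}{2}(n+1)$ of Theorem \ref{tiger1} reduces exactly to the stated hypothesis $\nu>\tfrac{1}{2}$ of the corollary.
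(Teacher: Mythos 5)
Your proof is correct and is exactly the paper's own argument: combine the integral formula (\ref{besint6}) with inequalities (\ref{bi2}) and (\ref{bi3}) of Theorem \ref{tiger1} taken at $n=0$, with $\nu$ replaced by $\nu-1$. Your index bookkeeping (the coefficients collapsing to $2\nu$ and $2\nu-1$, and the hypothesis reducing to $\nu>\tfrac{1}{2}$) is accurate, so nothing is missing.
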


\begin{proof}Combine the integral formula (\ref{besint6}) and inequalities (\ref{bi2}) and (\ref{bi3}) (with $n=0$) of Theorem \ref{tiger1}, and replace $\nu$ by $\nu-1$. 
\end{proof}

\begin{remark}We know from Theorem \ref{tiger} that the two-sided inequality (\ref{dob22}) is tight in the limit $x\rightarrow\infty$, and the upper bound is also tight as $x\downarrow0$.  To elaborate further, we denote by $F_\nu(x)$ the expression involving the generalized hypergeometric function in (\ref{dob22}), and the lower and upper bounds by $L_\nu(x)$ and $U_\nu(x)$.  We now note the bound $\frac{I_{\nu+1}(x)}{I_\nu(x)}>\frac{x}{2(\nu+1)+x}$, $\nu>-1$, which is the simplest lower bound of a sequence of more complicated rational lower bounds given in \cite{nasell2}.  We thus obtain that the relative error in approximating $F_\nu(x)$ by either $L_\nu(x)$ or $U_\nu(x)$ is at most
\begin{align*}\frac{2\nu I_\nu(x)-(2\nu-1)I_{\nu+2}(x)}{I_\nu(x)}-1&=(2\nu-1)\bigg(1-\frac{I_{\nu+2}(x)}{I_{\nu+1}(x)}\frac{I_{\nu+1}(x)}{I_{\nu}(x)}\bigg) \\
&<(2\nu-1)\bigg(1-\frac{x^2}{(2(\nu+2)+x)(2(\nu+1)+x)}\bigg)\\
&=\frac{(2\nu-1)(4(\nu+1)(\nu+2)+(4\nu+6)x)}{(2(\nu+1)+x)(2(\nu+2)+x)},
\end{align*}
which, for fixed $x$, has rate $\nu$ as $\nu\rightarrow\infty$ and, for fixed $\nu$, has rate $x^{-1}$ as $x\rightarrow\infty$.  

We used \emph{Mathematica} to compute the relative error in approximating $F_\nu(x)$ by $L_\nu(x)$ and $U_\nu(x)$, and numerical results are given in Tables \ref{table1} and \ref{table2}.  We observe that, for a given $x$, the relative error in approximating $F_\nu(x)$ by either $L_\nu(x)$ or $U_\nu(x)$ increases as $\nu$ increases.  We also notice from Table \ref{table1} that, for a given $\nu$, the relative error in approximating $F_\nu(x)$ by $L_\nu(x)$ decreases as $x$ increases.  However, from Table \ref{table2} we see that, for a given $\nu$, as $x$ increases the relative error in approximating $F_\nu(x)$ by $U_\nu(x)$ initially increases before decreasing.  This is because the upper bound is tight as $x\downarrow0$. 

\begin{table}[h]
\centering
\caption{\footnotesize{Relative error in approximating $F_\nu(x)$ by $L_\nu(x)$.}}
\label{table1}
{\scriptsize
\begin{tabular}{|c|rrrrrrr|}
\hline
%& & & & & $\nu$ & & & & \\
 \backslashbox{$\nu$}{$x$}      &    0.5 &    5 &    10 &    25 &    50 &    100 & 250   \\
 \hline
1 & 0.4948 & 0.2359 & 0.1076 & 0.0409 & 0.0202 & 0.0101 & 0.0040  \\
2.5 & 0.7981 & 0.6245 & 0.3692 & 0.1539 & 0.0784 & 0.0396 & 0.0159  \\
5 & 0.8994 & 0.8321 & 0.6414 & 0.3130 & 0.1678   & 0.0869  & 0.0355 \\
7.5 & 0.9330 & 0.8996 & 0.7822 & 0.4407 & 0.2482 & 0.1318 & 0.0547  \\ 
10 & 0.9498 & 0.9302 & 0.8562  & 0.5426  & 0.3205    & 0.1745 & 0.0735 \\  
  \hline
\end{tabular}
}
\end{table}
\begin{table}[h]
\centering
\caption{\footnotesize{Relative error in approximating $F_\nu(x)$ by $U_\nu(x)$.}}
\label{table2}
{\scriptsize
\begin{tabular}{|c|rrrrrrr|}
\hline
%& & & & & $\nu$ & & & & \\
 \backslashbox{$\nu$}{$x$}      &    0.5 &    5 &    10 &    25 &    50 &    100 & 250   \\
 \hline
1 & 0.0051 & 0.2038 & 0.1973 & 0.1034 & 0.0558 & 0.0290 & 0.0118  \\
2.5 & 0.0094 & 0.7325 & 1.1405 & 1.1517 & 0.7143 & 0.3967 & 0.1689  \\
5 & 0.0049 & 0.4995 & 1.5977 & 2.0626 & 1.4411   & 0.8462  & 0.3721 \\
7.5 & 0.0039 & 0.4100 & 1.6473 & 3.4230 & 2.7983 & 1.7750 & 0.8169  \\ 
10 & 0.0032 & 0.3379 & 1.4876  & 4.5026  & 4.2818    & 2.9312 & 1.4119 \\  
  \hline
\end{tabular}
}
\end{table}
\end{remark}

\appendix

\section{An inequality involving the modified Bessel function of the second kind}\label{appa}

In this appendix, we prove the following lemma, which is used in the proof of inequality (\ref{bk1}) of Theorem \ref{tiger}.

\begin{lemma}\label{lemap}Let $n<1$ and suppose $\nu>\frac{5}{2}-2n$.  Then, for $x>0$,
\begin{equation}\label{lemineq}2(\nu+n-2)K_{\nu+n}(x)-(2\nu+n-2)K_{\nu+n-1}(x)+(2-n)K_{\nu+n-3}(x)>0.
\end{equation}
\end{lemma}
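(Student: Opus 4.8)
The plan is to eliminate two of the three Bessel functions using the three-term recurrence (\ref{Kidentity}), which is legitimate precisely because the indices $\nu+n,\ \nu+n-1,\ \nu+n-3$ differ by integers, and then to analyse the resulting two-term expression by elementary means. Writing $b=\nu+n$ and noting $2\nu+n-2=2b-n-2$, I would substitute $K_{b}=K_{b-2}+\frac{2(b-1)}{x}K_{b-1}$ and $K_{b-3}=K_{b-1}-\frac{2(b-2)}{x}K_{b-2}$ (both instances of (\ref{Kidentity})) into the left-hand side of (\ref{lemineq}). A short computation collapses the three terms into
\begin{equation*}
2(\nu+n-2)\left\{\left(\frac{2(\nu+n-1)}{x}-1\right)K_{\nu+n-1}(x)+\left(1-\frac{2-n}{x}\right)K_{\nu+n-2}(x)\right\}.
\end{equation*}
Throughout I take $\nu+n>2$, which is what the application in the proof of Theorem \ref{tiger} requires (and which holds automatically when $n\leq\tfrac12$); then the prefactor $2(\nu+n-2)$ is positive and the problem reduces to showing that the braced expression $B(x)$ is positive for all $x>0$.

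I would then split according to the sign of the coefficient of $K_{\nu+n-1}(x)$, i.e.\ at the threshold $x=2(\nu+n-1)$. For $0<x<2(\nu+n-1)$ this coefficient is positive, so, using that $K_\mu(x)$ is increasing in $|\mu|$ (hence $K_{\nu+n-1}(x)>K_{\nu+n-2}(x)>0$, both indices being positive), I may replace $K_{\nu+n-1}$ by the smaller $K_{\nu+n-2}$ in the first term to obtain
\begin{equation*}
B(x)>\left(\frac{2(\nu+n-1)}{x}-1+1-\frac{2-n}{x}\right)K_{\nu+n-2}(x)=\frac{2\nu+3n-4}{x}K_{\nu+n-2}(x),
\end{equation*}
and the hypothesis $\nu>\tfrac52-2n$ gives $2\nu+3n-4>1-n>0$. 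Thus $B(x)>0$ on this range, and one sees transparently where the two standing assumptions enter.

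The remaining case $x>2(\nu+n-1)$, where the coefficient of $K_{\nu+n-1}$ is negative, is the genuine obstacle. There $B(x)>0$ is equivalent to the ratio bound
\begin{equation*}
\frac{K_{\nu+n-1}(x)}{K_{\nu+n-2}(x)}<\frac{x-(2-n)}{x-2(\nu+n-1)},\qquad x>2(\nu+n-1),
\end{equation*}
and the difficulty is that a crude bound such as $K_{\mu+1}/K_\mu<1+\tfrac{2\mu}{x}$ is not sharp enough for large $x$ when $n<0$. An asymptotic check is reassuring: as $x\to\infty$ the left side behaves like $1+\tfrac{2b-3}{2x}$ and the right side like $1+\tfrac{2b+n-4}{x}$, so the inequality holds with room to spare exactly under $\nu>\tfrac52-2n$ (the second-order terms are separated by the sign of $2\nu+4n-5$). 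I expect this ratio bound to be the step that truly uses integrality and requires the most care.

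To close this case I would establish the ratio bound by a monotonicity argument — showing that $x\mapsto(x-2(\nu+n-1))K_{\nu+n-1}(x)-(x-(2-n))K_{\nu+n-2}(x)$ has constant sign by combining its $x\to\infty$ limit with its derivative (again reducible via (\ref{Kidentity})) — or by invoking a sufficiently sharp upper bound for $K_{\mu+1}/K_\mu$. A cleaner self-contained alternative for the whole lemma is the integral representation $K_\mu(x)=\int_0^\infty \mathrm{e}^{-x\cosh\theta}\cosh(\mu\theta)\,\mathrm{d}\theta$: substituting it reduces (\ref{lemineq}) to the pointwise nonnegativity of
\begin{equation*}
g(\theta)=2(\nu+n-2)\cosh((\nu+n)\theta)-(2\nu+n-2)\cosh((\nu+n-1)\theta)+(2-n)\cosh((\nu+n-3)\theta),
\end{equation*}
and since the coefficients sum to zero one can factor out $\sinh(\theta/2)$ (using the integer frequency gaps) and reduce to a single hyperbolic-sine inequality. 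Either way, the crux is a residual inequality whose global positivity rests on $\nu>\tfrac52-2n$, and this is the step I expect to be the main obstacle.
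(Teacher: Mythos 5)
Your opening steps are sound and coincide with the paper's own proof: the double application of (\ref{Kidentity}) collapses the left-hand side of (\ref{lemineq}) into exactly the two-term expression the paper derives, and your one-step treatment of the range $0<x<2(\nu+n-1)$ (replace $K_{\nu+n-1}$ by the smaller $K_{\nu+n-2}$, giving $B(x)>\frac{2\nu+3n-4}{x}K_{\nu+n-2}(x)>0$ since $2\nu+3n-4>1-n>0$) is correct and in fact slightly tidier than the paper's split into the subranges $0<x<2-n$ and $2-n\leq x\leq2(\nu+n-1)$. Your insistence on the standing assumption $\nu+n>2$ is also a fair catch: the paper claims this follows from $n<1$ and $\nu>\frac{5}{2}-2n$, but that implication only holds for $n\leq\frac12$, so making the assumption explicit is legitimate.

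However, the proposal is not a proof, because the case $x>2(\nu+n-1)$ --- which you yourself identify as the crux --- is never closed. What you offer there is an asymptotic consistency check and three candidate strategies, none carried out. The large-$x$ expansion matches the two sides of the ratio inequality only to order $x^{-1}$ and gives no control at finite $x$. The proposed monotonicity argument is circular as stated: differentiating $(x-2(\nu+n-1))K_{\nu+n-1}(x)-(x-(2-n))K_{\nu+n-2}(x)$ and simplifying with (\ref{Kidentity}) returns a three-term combination of the very same type as (\ref{lemineq}), so no progress is made without a new ingredient. The integral-representation route does reduce correctly: writing $b=\nu+n$, one finds $g(\theta)=4\sinh(\theta/2)\bigl[(b-2)\sinh((b-\tfrac12)\theta)-\tfrac{2-n}{2}\sinh((b-\tfrac32)\theta)-\tfrac{2-n}{2}\sinh((b-\tfrac52)\theta)\bigr]$; but the bracketed inequality is precisely as delicate as the lemma itself, since its coefficient sum $(b-2)-(2-n)=\nu+2n-4$ is negative for every $\nu$ in the nonempty window $\frac52-2n<\nu<4-2n$, so termwise comparison of the hyperbolic sines fails exactly where the lemma is sharp (and pointwise positivity of $g$ is in any case a strictly stronger statement than (\ref{lemineq}), so it too would need a genuine proof). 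The ingredient the paper uses to finish, and which your proposal lacks, is an external sharp ratio bound: Theorem 2 of \cite{segura},
\begin{equation*}
\frac{K_{\mu-1}(x)}{K_{\mu}(x)}>\frac{x}{\mu-\frac12+\sqrt{x^2+\left(\mu-\frac12\right)^2}},
\end{equation*}
applied with $\mu=\nu+n-1$, combined with the elementary algebraic verification (the paper's Lemma \ref{need3}, with $\nu$ replaced by $\nu-1$, proved by clearing denominators and the quadratic formula) that this lower bound exceeds $\frac{x-2(\nu+n-1)}{x-(2-n)}$ for all $x>2(\nu+n-1)$ precisely under $n<1$, $\nu>\frac52-2n$. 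That is exactly the inequality you reduced the problem to; until you supply a bound of this strength, or a complete proof of the hyperbolic-sine inequality, the hard half of the lemma remains open.
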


We first prove the following elementary inequality. 

\begin{lemma}\label{need3}Let $n<1$ and suppose $\nu>1-\frac{3}{2}n$.  Then, for $x>2(\nu+n)$,
\begin{equation}\label{bk70}\frac{x}{\nu+n-1/2+\sqrt{x^2+(\nu+n-1/2)^2}}>\frac{x-2(\nu+n)}{x-(2-n)}.
\end{equation}
\end{lemma}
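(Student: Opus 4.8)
The plan is to reduce (\ref{bk70}) to a polynomial inequality by rationalising the left-hand side and then squaring. Write $b=\nu+n-\tfrac12$ and $c=2-n$; the hypotheses $n<1$ and $\nu>1-\tfrac32 n$ give $b>0$ and $2(\nu+n)>2-n$, i.e. $2b+1>c$, so on the range $x>2(\nu+n)=2b+1$ both $x$ and $x-c$ are positive. Multiplying the numerator and denominator of the left-hand side by $\sqrt{x^2+b^2}-b$ rewrites it as $\frac{\sqrt{x^2+b^2}-b}{x}$, and clearing the positive denominators $x$ and $x-c$ shows that (\ref{bk70}) is equivalent to $(\sqrt{x^2+b^2}-b)(x-c)>x^2-(2b+1)x$. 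Isolating the radical, this becomes
$$\sqrt{x^2+b^2}\,(x-c)>x^2-(b+1)x-bc.$$

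First I would dispose of the trivial case: when the right-hand side of the last display is $\le 0$ it holds automatically, since the left-hand side is positive for $x>c$. When the right-hand side is positive I would square both sides (legitimate, as $x-c>0$), obtaining $(x^2+b^2)(x-c)^2>(x^2-(b+1)x-bc)^2$. Expanding and subtracting, the quartic and constant terms cancel and a factor $x>0$ comes out, so the inequality reduces to $G(x)>0$, where
$$G(x)=2(b+1-c)\,x^2+(c^2+2bc-2b-1)\,x-2bc(2b+1).$$
It then remains to prove $G(x)>0$ for all $x>2b+1$.

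The step I expect to make the argument work is the evaluation of $G$ at the left endpoint: a short computation yields the perfect-square factorisation $G(2b+1)=(2b+1)\big((2b+1)-c\big)^2$, which is strictly positive because $2b+1>0$ and $2b+1>c$. Since the constant term $-2bc(2b+1)$ is negative, $G(0)<0$, so $0$ lies strictly between the two roots of $G$ as soon as the leading coefficient $2(b+1-c)=2(\nu+2n-\tfrac32)$ is positive. Granting this, $G$ is an upward parabola that is negative at $0$ and positive at $2b+1>0$, forcing $2b+1$ to lie to the right of the larger root, so that $G(x)>0$ for every $x\ge 2b+1$; combining the two cases gives (\ref{bk70}).

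The main obstacle is that positivity of $G$ at the single point $x=2b+1$ is not by itself enough: one must rule out $G$ dipping below zero for large $x$, and this is governed entirely by the sign of the leading coefficient $2(\nu+2n-\tfrac32)$. Thus the decisive structural input is the positivity $\nu+2n>\tfrac32$, which is the main point to extract from the hypotheses; the perfect-square evaluation $G(2b+1)=(2b+1)(2b+1-c)^2$ together with $G(0)<0$ then closes the argument via the root-location step.
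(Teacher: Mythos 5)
Your reduction is algebraically sound and follows essentially the same route as the paper's own proof: clear the positive denominators, isolate the radical, square, and reduce (\ref{bk70}) to the positivity of a quadratic on $(2(\nu+n),\infty)$. Your $G$ is exactly the quadratic factor in the paper's cubic (the paper's printed constant term is missing a $-8\nu^2$, a typo, though its discriminant and $x^*$ are computed from the correct value), and your endpoint factorisation $G(2b+1)=(2b+1)\bigl((2b+1)-c\bigr)^2$ together with $G(0)<0$ is a tidier way of locating the roots than the paper's explicit formula for the larger root $x^*$.

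The gap is at the step you yourself single out as decisive. You need the leading coefficient $2(\nu+2n-\tfrac32)$ to be positive, and you assert that $\nu+2n>\tfrac32$ is ``the main point to extract from the hypotheses''; but it does not follow from them. The hypotheses $n<1$ and $\nu>1-\tfrac32 n$ give only $\nu+2n>1+\tfrac12 n$, and $1+\tfrac12 n<\tfrac32$ precisely because $n<1$; for instance $n=0$, $\nu=1.2$ satisfies the hypotheses while $\nu+2n=1.2<\tfrac32$. Moreover the gap cannot be closed, because the lemma as stated is false in that regime: expanding both sides for large $x$ gives $\text{(left side)}-\text{(right side)}=(\nu+2n-\tfrac32)x^{-1}+O(x^{-2})$, so the inequality fails for all large $x$ whenever $\nu+2n<\tfrac32$ (concretely, for $n=0$, $\nu=1.2$, $x=100$ the left side is $\approx 0.9930$ and the right side is $\approx 0.9959$). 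This defect is shared by the paper: its claim that the cubic inequality holds ``for $x>x^*$'' tacitly presupposes that the parabola opens upwards, i.e.\ $4n+2\nu-3>0$. The repair is to strengthen the hypothesis to $\nu\geq\tfrac32-2n$; then your argument is complete (in the boundary case $\nu+2n=\tfrac32$ the quadratic degenerates to a linear function with positive slope $(1-n)(2\nu+n+2)$, and your endpoint evaluation still closes the argument). Since the paper invokes Lemma \ref{need3} only in the proof of Lemma \ref{lemap}, with $\nu$ replaced by $\nu-1$ under $\nu>\tfrac52-2n$, which yields $(\nu-1)+2n>\tfrac32$, the downstream results are unaffected by this correction.
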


\begin{proof}Proving the lemma is equivalent to proving that, for $x>2(\nu+n)$,
\begin{align*}0&<\big(x(x-(2-n))-(\nu+n-\tfrac{1}{2})(x-2(\nu+n))\big)^2\\
&\quad-(x-2(\nu+n))^2(x^2+(\nu+n-\tfrac{1}{2})^2) \\
&=x\big((4n+2\nu-3)x^2-(n^2+2n\nu+n-2\nu-2)x\\
&\quad+(4n^3+8n^2\nu+4n\nu^2-10n^2-18n\nu+4n+4\nu)\big).
\end{align*}
On using the quadratic formula, we see that this inequality holds for 
\begin{align*}x>x^*:=\frac{n^2+2n\nu+n-2\nu-2+\sqrt{(3n+2\nu-2)^2(16\nu+1+14n-7n^2-8n\nu)}}{2(4n+2\nu-3)}.
\end{align*}
We can use a similar argument to the one we just used to show that (\ref{bk70}) holds for $x>x^*$ to show that $x^*<2(\nu+n)$, provided $n<1$ and $\nu>1-\frac{3}{2}n$.  This proves the lemma.
\end{proof}

\noindent{\emph{Proof of Lemma \ref{lemap}.}} Due to identity (\ref{Kidentity}) we can write
\begin{align*}&2(\nu+n-2)K_{\nu+n}(x)-(2\nu+n-2)K_{\nu+n-1}(x)+(2-n)K_{\nu+n-3}(x) \\
&\quad=2(\nu+n-2)\bigg(K_{\nu+n-2}(x)+\frac{2(\nu+n-1)}{x}K_{\nu+n-1}(x)\bigg)\\
&\quad\quad-(2\nu+n-2)K_{\nu+n-1}(x)+(2-n)\bigg(K_{\nu+n-1}(x)-\frac{2(\nu+n-2)}{x}K_{\nu+n-2}(x)\bigg) \\
&\quad=2(\nu+n-2)\bigg\{\bigg(\frac{2(\nu+n-1)}{x}-1\bigg)K_{\nu+n-1}(x)-\bigg(\frac{2-n}{x}-1\bigg)K_{\nu+n-2}(x)\bigg\}.
\end{align*}
Therefore, since $\nu+n-2>0$, proving inequality (\ref{lemineq}) is equivalent to proving that
\begin{equation}\label{need1}\bigg(\frac{2(\nu+n-1)}{x}-1\bigg)K_{\nu+n-1}(x)>\bigg(\frac{2-n}{x}-1\bigg)K_{\nu+n-2}(x),
\end{equation}
for all $x>0$.  This inequality holds for $0<x<2-n$, due to inequality (\ref{cake}) and because the conditions imposed on $n$ and $\nu$ in the statement of the lemma ensure that $\nu>2-\frac{3}{2}n$, and so $2(\nu+n-1)>2-n$.  Due to the fact that $K_{\nu}(x)>0$ for all $x>0$ and $\nu\in\mathbb{R}$, we also immediately see that inequality (\ref{need1}) holds for $2-n\leq x\leq 2(\nu+n-1)$.  It now suffices to prove inequality (\ref{need1}) for $x>2(\nu+n-1)$.  This is equivalent to proving that, for $x>2(\nu+n-1)$,
\begin{equation}\label{need2}\frac{K_{\nu+n-2}(x)}{K_{\nu+n-1}(x)}>\frac{x-2(\nu+n-1)}{x-(2-n)}.
\end{equation}
But Theorem 2 of \cite{segura} (see also \cite{rs16} and \cite{ln10}) states that, for $\nu>\frac{3}{2}-n$ and $x>0$,
\begin{equation*}\frac{K_{\nu+n-2}(x)}{K_{\nu+n-1}(x)}>\frac{x}{\nu+n-3/2+\sqrt{x^2+(\nu+n-3/2)^2}}.
\end{equation*}
Therefore, by Lemma \ref{need3} (with $\nu$ replaced by $\nu-1$), it follows that (\ref{need2}) holds for $x>2(\nu+n-1)$, if $\nu>2-\frac{3}{2}n$ and $n<1$.  (Note that in the proof of this lemma we have required that $\nu+n-2>0$, $\nu>\frac{3}{2}-n$ and $\nu>2-\frac{3}{2}n$, which is guaranteed by our assumption that $n<1$ and $\nu>\frac{5}{2}-2n$.)  This concludes the proof of the lemma.    
\hfill $\Box$

\section{Elementary properties of modified Bessel functions}\label{appb}

Here we list standard properties of modified Bessel functions that are used throughout this paper.  All these formulas can be found in \cite{olver}, except for the inequalities.

The modified Bessel functions $I_{\nu}(x)$ and $K_{\nu}(x)$ are both regular functions of $x\in\mathbb{R}$.  For positive values of $x$ the functions $I_{\nu}(x)$ and $K_{\nu}(x)$ are positive for $\nu>-1$ and all $\nu\in\mathbb{R}$, respectively.  The modified Bessel functions satisfy the following identities and differentiation formulas:
\begin{align}\label{parity}K_{-\nu}(x)&=K_{\nu}(x), \\
\label{Iidentity}I_{\nu +1} (x) &= I_{\nu -1} (x) - \frac{2\nu}{x} I_{\nu} (x), \\
\label{Kidentity}K_{\nu +1} (x) &= K_{\nu -1} (x) + \frac{2\nu}{x} K_{\nu} (x), \\
\label{diffone}\frac{\mathrm{d}}{\mathrm{d}x} \bigg(\frac{I_{\nu} (x)}{x^\nu} \bigg) &= \frac{ I_{\nu +1} (x)}{x^\nu}, \\
\label{diffKi}\frac{\mathrm{d}}{\mathrm{d}x} \bigg(\frac{K_{\nu} (x)}{x^\nu} \bigg) &=- \frac{ K_{\nu +1} (x)}{x^\nu}, 
\end{align}
and have the following asymptotic behaviour:
\begin{align}\label{Itend0}I_{\nu} (x) &\sim \frac{(\frac{1}{2}x)^\nu}{\Gamma(\nu +1)}\bigg(1+\frac{x^2}{4(\nu+1)}\bigg), \quad x \downarrow 0, \: \nu>-1, \\
\label{Itendinfinity}I_{\nu} (x) &\sim \frac{\mathrm{e}^{x}}{\sqrt{2\pi x}}\bigg(1-\frac{4\nu^2-1}{8x}\bigg), \quad x \rightarrow\infty, \: \nu\in\mathbb{R}, \\
\label{Ktend0}K_{\nu} (x) &\sim  2^{\nu -1} \Gamma (\nu) x^{-\nu},  \quad x\downarrow0, \: \nu>0,\\
\label{Ktendinfinity} K_{\nu} (x) &\sim \sqrt{\frac{\pi}{2x}} \mathrm{e}^{-x}\bigg(1+\frac{4\nu^2-1}{8x}\bigg), \quad x \rightarrow \infty, \: \nu\in\mathbb{R}.
\end{align}
Let $x > 0$. Then the following inequalities hold:
\begin{align}\label{Imon}I_{\nu} (x) &< I_{\nu - 1} (x), \quad \nu \geq \tfrac{1}{2},\\
\label{cakkk}K_{\nu} (x) &\leq K_{\nu - 1} (x), \quad \nu \leq \tfrac{1}{2},\\
\label{cake}K_{\nu} (x) &\geq K_{\nu - 1} (x), \quad \nu \geq \tfrac{1}{2}, \\
\label{kineq5} K_{\mu}(x)&<K_\nu(x), \quad 0\leq \mu<\nu.
\end{align}
We have equality in (\ref{cakkk}) and (\ref{cake}) if and only if $\nu=\frac{1}{2}$.  Inequalities (\ref{cakkk}) and (\ref{cake}) for $K_{\nu}(x)$ can be found in \cite{ifantis}.  Inequality (\ref{kineq5}) is immediate from the integral representation $K_\nu(x)=\int_0^\infty\mathrm{e}^{-x\cosh(t)}\cosh(\nu t)\,\mathrm{d}t$, $x>0$, $\nu\in\mathbb{R}$.  The inequality for $I_{\nu}(x)$ can be found in \cite{jones} and \cite{nasell}, which extends a result of \cite{soni}.  A survey of related inequalities for modified Bessel functions is given by  \cite{baricz24}, and refinements of inequalities (\ref{Imon})--(\ref{cake}) can be found in \cite{segura} and references therein.

%\subsection{Integration}
%\begin{equation} \label{pdfk} \int_{-\infty}^{\infty}\mathrm{e}^{\beta t} |t|^{\nu} K_{\nu}(|t|)\,\mathrm{d}t =\frac{\sqrt{\pi}\Gamma(\nu+\frac{1}{2})2^{\nu}}{(1-\beta^2)^{\nu+\frac{1}{2}}}, \quad \nu>-\tfrac{1}{2}, \: -1<\beta <1.
%\end{equation}

\subsection*{Acknowledgements}
The author is supported by a Dame Kathleen Ollerenshaw Research Fellowship.  

\footnotesize

\end{document}